\DeclarePairedDelimiter{\ceil}{\lceil}{\rceil}
\DeclarePairedDelimiter{\floor}{\lfloor}{\rfloor}
\theoremstyle{plain}
\newtheorem{THM}{Theorem}[section]
\newtheorem*{THM*}{Theorem}
\newtheorem{PROP}[THM]{Proposition}
\newtheorem{LEMMA}[THM]{Lemma}
\theoremstyle{definition}
\date{}
\theoremstyle{definition}
\theoremstyle{remark}
\newcommand{\pr}[1]{\mathbb{P} \left[ #1 \right]}
\newcommand{\arXiv}[1]{Preprint available on \url{http://arxiv.org/abs/#1}}
\newcommand{\card}[1]{\left| #1 \right|}
\newcommand{\eps}{\varepsilon}
\newcommand{\Kmn}{K_{m\otimes n}}
\def\mathyp{{\hbox{-}}}
\newcommand{\cheg}{u_1 \mathyp u_2 \mathyp u_3} 
\newcommand{\chegp}{u_1^\prime \mathyp u_2^\prime \mathyp u_3^\prime} 
\newcommand{\chek}{[v_1 v_2 v_3]} 
\newcommand{\chekp}{[v_1^\prime v_2^\prime v_3^\prime]} 
\newcommand{\baduv}{B_{\cheg}^{\chek}} 
\newcommand{\baduvp}{B_{\chegp}^{\chekp}} 
\newcommand{\quag}{(u_1 u_2)(u_3 u_4)}
\newcommand{\quagp}{(u_1^\prime u_2^\prime)(u_3^\prime u_4^\prime)}
\newcommand{\quak}{[v_1 v_2 v_3 v_4]}
\newcommand{\quakp}{[v_1^\prime v_2^\prime v_3^\prime v_4^\prime]}
\newcommand{\caduv}{B_{\quag}^{\quak}}
\newcommand{\caduvp}{B_{\quagp}^{\quakp}}
\newcommand{\ef}{(e_1, e_2)}
\newcommand{\efp}{(e_1^\prime, e_2^\prime)}
\newcommand{\badet}{B^{\ef}_{\, \tau}}
\newcommand{\badetp}{B^{\efp}_{\,\tau ^\prime}}
\newcommand{\cnrl}{C_n^{(r)}(\ell)}
\newcommand{\Knr}{K_n^{(r)}}
\title{\vspace{-0.8cm} Bounded colorings of multipartite graphs and hypergraphs\thanks{Research supported by SNSF grant 200021-149111.}}
\author{
Nina Kam\v{c}ev \thanks{Department of Mathematics, ETH, 8092 Zurich. Email: nina.kamcev@math.ethz.ch.}
\and
Benny Sudakov \thanks{Department of Mathematics, ETH, 8092 Zurich. Email: benjamin.sudakov@math.ethz.ch.}
\and
Jan Volec \thanks{Department of Mathematics, ETH, 8092 Zurich. Email: jan@ucw.cz.}
}
\begin{document}
    \maketitle
\begin{abstract}
    Let $c$ be an edge-coloring of the complete $n$-vertex graph $K_n$. The
problem of finding properly colored and rainbow  Hamilton cycles in $c$ was
initiated in 1976 by Bollob\'as and Erd\H os and has been extensively studied since then. Recently it was
extended to the hypergraph setting by Dudek, Frieze and
Ruci\'nski~\cite{dudek}. We generalize these results, giving sufficient local
(resp. global) restrictions on the colorings which guarantee a properly
colored (resp. rainbow) copy of a given hypergraph $G$. 
    
    We also study multipartite analogues of these questions. 
We give (up to a constant factor) optimal sufficient conditions for a coloring $c$ of the
complete balanced $m$-partite graph to contain a properly colored or rainbow copy of a
given graph $G$ with maximum degree $\Delta$. Our bounds exhibit
a surprising transition in the rate of growth, showing that the problem is
fundamentally different in the regimes $\Delta \gg m$ and $\Delta \ll m$
Our main tool is the framework of Lu and Sz\'ekely for the space of random bijections, which we extend to product spaces.
\end{abstract}

\section{Introduction}
    In this paper, we study a problem of finding a copy of a given graph/hypergraph $G$ in another edge-colored graph/hypergraph, such that the colors of the edges of $G$ satisfy certain restrictions. A very general result of this
type is the canonical Ramsey theorem.
    
    \begin{THM}[\cite{rado}] \label{canonical} For every graph $G$, there
exists an integer $n$ such that any coloring of the edges of the complete
graph $K_n$ contains at least one of the following copies of $G$:
        
        \begin{enumerate} \itemsep-5pt \vspace{-10pt}  
					\item a monochromatic copy, in which all the edges have the same color,
					\item a rainbow copy, in which no two edges have the same color, or
					\item a lexicographic copy, i.e.~a copy whose vertices can be ordered in such a way that the color of any edge is entirely determined by the smaller endpoint.
        \end{enumerate}
    \end{THM}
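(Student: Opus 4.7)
My plan is to prove the statement by first reducing to the case of complete graphs and then applying a classical argument based on Ramsey's theorem for $4$-uniform hypergraphs. Since any monochromatic, rainbow, or lexicographic copy of $K_k$, with $k = |V(G)|$, trivially contains a copy of $G$ with the same property (the vertex ordering for the lexicographic case restricts to any subset), it suffices to show that there exists $N = N(k)$ such that every edge-coloring $c$ of $K_N$ contains one of the three canonical copies of $K_k$.

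To this end, I would order the vertices of $K_N$ as $1 < 2 < \cdots < N$ and define an auxiliary coloring $\chi$ of the $4$-element subsets of $[N]$: for each $T = \{a,b,c,d\}$ with $a<b<c<d$, let $\chi(T)$ record the partition of the six pairs in $T$ into equivalence classes under the relation ``receive the same $c$-color''. The number of such partitions is at most the Bell number $B_6 = 203$, a constant independent of $N$. Applying the finite Ramsey theorem for $4$-uniform hypergraphs with $B_6$ colors then yields, for $N$ sufficiently large, a subset $S \subs [N]$ with $|S| = k$ on which $\chi$ is constant; denote this shared pattern by $\pi$.

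The core of the argument is to analyze $\pi$. I claim that $\pi$ must be one of exactly four patterns: the single-class partition (giving a monochromatic $K_k$ on $S$), the discrete partition (giving a rainbow $K_k$), or one of the two ``lexicographic'' patterns that identify two pairs in $T$ exactly when they share their smaller, respectively their larger, endpoint. In the last two cases, the coloring on $S$ is lexicographic under the natural order on $S$ or its reverse, both of which fall under case~(iii) of the theorem. To rule out every other pattern, I would compare the restriction of $\pi$ to two overlapping $4$-subsets of $S$: any non-canonical merging of pairs can always be propagated by substituting one vertex in a $4$-set and invoking transitivity of equality, forcing an identification or separation that contradicts the prescribed form of $\pi$.

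The main obstacle is this final case analysis, which must eliminate each of the $B_6 - 4$ non-canonical partitions by a separate compatibility check. While each individual case is short, organizing them cleanly so that no pattern is overlooked is the most tedious part of the argument. Once completed, the resulting bound on $N$ depends only on the $4$-uniform Ramsey number for $[k]$ with $B_6$ colors.
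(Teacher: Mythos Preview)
The paper does not actually prove Theorem~\ref{canonical}; it is quoted from Erd\H{o}s and Rado~\cite{rado} as a classical background result, so there is no ``paper's own proof'' to compare against.

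That said, your outline is a well-known and valid route to the canonical Ramsey theorem, different from the original greedy argument of Erd\H{o}s and Rado. Coloring $4$-sets by the equality pattern of their six edges and applying $4$-uniform Ramsey is standard; the heart of the matter is exactly the case analysis you flag, showing that on a $\chi$-homogeneous set only the four canonical patterns survive. Two small points are worth tightening. First, the overlapping-$4$-set substitutions you invoke require at least five (and in a few cases six) points, so you should take $|S| \ge \max(k,6)$ rather than $|S| = k$; this costs nothing since the Ramsey bound absorbs it. Second, once you have established that $\pi$ is one of the four canonical patterns on every $4$-subset of $S$, you still need a short argument that the \emph{global} coloring on $S$ is monochromatic, rainbow, or lexicographic; this follows easily by chaining overlapping $4$-sets, but it should be stated. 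With those adjustments the plan is complete.
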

    If we restrict the number of colors, the conclusion reduces to (i), i.e.
we find a monochromatic copy of a graph $G$. It is natural to ask what are
possible restrictions that guarantee (ii). To tackle this question, we give the
following definitions.
    
    Let $H$ be a graph. Throughout the paper, by a \emph{coloring of $H$}, we
mean an edge-coloring $c: E(H) \rightarrow \mathbb{N}$. A coloring $c$ is
\emph {locally $k$-bounded} if each vertex in $V$ is incident to at most $k$
edges of any given color. Furthermore, a coloring $c$ is \emph{globally
$k$-bounded} if it has  at most $k$ edges of the same color.  In both cases,
$k$ gives a lower bound on the number of colors used by $c$, so our
restrictions are in a sense reciprocal to the number of colors. Note that a
locally $1$-bounded coloring of $H$ is exactly a proper coloring (where any
two incident edges carry distinct colors), and a globally $1$-bounded
coloring is a rainbow coloring of $H$ (in which all edges receive distinct
colors).

    For a given graph $G$, we say a coloring $c$ of $H$ is \emph{G-proper} if
it contains a properly colored copy of $G$, and \emph{G-rainbow} if it
contains a rainbow copy of $G$.
A conjecture of Bollob\'as and Erd\H os from~\cite{bollobas} states that
any locally $\floor{\frac{n}{2}}$-bounded coloring of $K_n$ contains a
properly colored Hamilton cycle (denoted by $C_n$). In \cite{bollobas} they
proved a weaker result, that for $\alpha = \frac{1}{69}$, any locally $\alpha
n$-bounded coloring is $C_n$-proper. After several improvements of the
constant $\alpha$ (see  \cite{chen}, \cite{shearer} and \cite{alongutin}), the
asymptotic variant of the conjecture was proved by Lo \cite{lo}, that is, for
$\alpha < \frac 12$, any locally $\alpha n$-bounded coloring is $C_n$ proper.
    
    For general graphs $G$, it is natural to ask what is the minimum value of
$k$ for which any locally $k$-bounded coloring is $G$-proper. In particular, how does
this $k$ depend on the maximum degree of $G$? The intuition behind this
question is that the easiest way to avoid a properly colored copy of $G$ is to
forbid an embedding of its vertex of maximum degree. 
    Alon et al. \cite{alon} have shown that the coloring is certainly
$G$-proper for $k = \frac{\sqrt{n}}{\Delta(G)^{13.5}}$. This has been
significantly improved by B\"ottcher, Kohayakawa and Procacci \cite{boettcher}.
    \begin{THM}[\cite{boettcher}]  \label{boettcher1}
	If $G$ is an $n$-vertex graph with maximum degree $\Delta$, then any
locally $\left( \frac{n}{22.4 \Delta^2} \right)$-bounded coloring of $K_n$ is
$G$-proper.
    \end{THM}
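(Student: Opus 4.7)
My plan is to apply a version of the Lovász Local Lemma to a uniformly random embedding $\phi : V(G) \to V(K_n)$ of $G$ into $K_n$, so the target is to show that with positive probability the resulting copy of $G$ is properly colored. The natural obstruction to proper coloring is that some pair of adjacent edges of $G$ (a ``cherry'' $\cheg$ in $G$) gets mapped to a pair of incident edges in $K_n$ that happen to carry the same color (a ``bad cherry'' $\chek$ in $K_n$). Accordingly, for each ordered cherry $\cheg$ of $G$ and each ordered cherry $\chek$ in $K_n$ with $c(v_1v_2)=c(v_2v_3)$, I would introduce a bad event $\baduv$ that $\phi(u_i)=v_i$ for $i=1,2,3$. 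The copy of $G$ produced by $\phi$ is $G$-proper precisely when no bad event occurs.

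The first step is to compute $\pr{\baduv} = (n-3)!/n! = 1/(n(n-1)(n-2))$, and to count the bad cherries. Local $k$-boundedness gives, at every $v_2\in V(K_n)$, at most $(n-1)(k-1)$ ordered monochromatic pairs of incident edges (since each of the $n-1$ edges at $v_2$ can be paired with at most $k-1$ others sharing its color). Hence any fixed vertex of $K_n$ lies in $O(nk)$ ordered bad cherries; likewise any fixed vertex of $G$ lies in $O(\Delta^2)$ ordered cherries of $G$.

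The second step is to set up the dependency graph for the Local Lemma. Two bad events $\baduv$ and $\baduvp$ are independent under the uniform bijection $\phi$ whenever $\{u_1,u_2,u_3\}\cap\{u_1^\prime,u_2^\prime,u_3^\prime\}=\emptyset$ and $\{v_1,v_2,v_3\}\cap\{v_1^\prime,v_2^\prime,v_3^\prime\}=\emptyset$; so I take the dependency graph to encode overlap on either the $G$-side or the $K_n$-side. Each $\baduv$ overlaps with at most $9\Delta(\Delta-1)$ cherries of $G$ on the domain side (three vertices, each in $3\Delta(\Delta-1)$ ordered cherries) and with at most $9(n-1)(k-1)$ bad cherries of $K_n$ on the target side, yielding a dependency degree $d=O(n^2 k\Delta^2)$. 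Combined with $p=1/(n(n-1)(n-2))$, the symmetric Local Lemma condition $epd\le 1$ becomes, up to constants, $k\Delta^2/n \le c$ for an explicit $c$, which is precisely the shape of the bound claimed.

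The main obstacle is sharpening the constant to $22.4$: the naive counting above gives $c^{-1}$ on the order of $18e\approx 48.9$, which is a factor of roughly two too weak. To close this gap I expect one must (i) use the lopsided / negatively correlated version of the Local Lemma adapted to random bijections in the Lu–Sz\'ekely framework, exploiting that two bad events sharing vertices are in fact negatively, not positively, correlated, and (ii) replace the symmetric statement by the cluster-expansion (Bissacot--Fern\'andez--Procacci--Scoppola) version, which allows the weights attached to each event to absorb the double counting between ``$G$-side'' and ``$K_n$-side'' overlaps. Carrying out both improvements with the precise constants is the only delicate point; once the Local Lemma applies, positivity of the probability of avoiding every $\baduv$ immediately produces a $G$-proper copy of $G$ in $K_n$.
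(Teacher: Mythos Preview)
This theorem is not proved in the paper: it is quoted from B\"ottcher, Kohayakawa and Procacci~\cite{boettcher}, so there is no ``paper's own proof'' to compare against directly. What the paper \emph{does} prove, using exactly the machinery you outline, is the multipartite analogue Theorem~\ref{partite1}, and comparing your plan against that proof is instructive.

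Your overall strategy is correct and matches the paper's proof of Theorem~\ref{partite1}: random injection, canonical bad events indexed by pairs (cherry in $G$, monochromatic cherry in $K_n$), dependency graph given by $\mathcal S$-intersection on either side, and the Lu--Sz\'ekely negative dependency graph (Theorem~\ref{luszekely}) combined with the asymmetric lopsided Local Lemma (Lemma~\ref{asymmetric}). One small correction: under a uniform random bijection, events with disjoint $\{u_i\}$ and disjoint $\{v_i\}$ are \emph{not} independent, only negatively correlated; this is precisely why the lopsided/Lu--Sz\'ekely version is required, not optional. The paper's counting gives $I_G(B)+I_K(B)\le 9\Delta(\Delta-1)n^2k$ and the condition $\sum\pr{B'}\le\tfrac14$ yields the constant $48$, in line with your back-of-the-envelope $18e\approx 49$.

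Where you are exactly right is in diagnosing what is needed for $22.4$: item~(ii) of your plan, the cluster-expansion improvement of Bissacot--Fern\'andez--Procacci--Scoppola, is the key additional ingredient in~\cite{boettcher}, and it is \emph{not} used in the present paper (which is content with $48$ in the multipartite setting). Item~(i) alone---lopsidedness via Lu--Sz\'ekely---does not buy the factor of two; the paper already uses it and still gets $48$. So your identification of the bottleneck is accurate, but be aware that carrying out the cluster-expansion computation with the precise weights is not a triviality you can wave at: it is the actual content of the cited result.
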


    We have already hinted that global bounds on colorings yield rainbow
copies of $G$, so all the stated results have parallels in this setting. An
analogue of the Bollob\'as-Erd\H os conjecture was proposed in 1986 by Hahn and
Thomassen \cite{hahn}. They conjectured that there is a constant $\alpha$ such
that any globally $\alpha n$-bounded coloring of $K_n$ is $C_n$-{rainbow}, which was
proved by  Albert, Frieze and Reed \cite{albert} for $\alpha = \frac{1}{64}$.
    
    Using the same technique as for Theorem \ref{boettcher1}, B\"ottcher,
Kohayakawa and Procacci have translated this result to any bounded-degree
graph. This confirms a conjecture of Frieze and Krivelevich \cite{frieze},
which was originally only stated for $G$ being a tree.
    \begin{THM}[\cite{boettcher}] \label{boettcher2}
        If $G$ is an $n$-vertex graph with maximum degree $\Delta$, then any globally $\left( \frac{n}{51\Delta^2} \right)$-bounded coloring of $K_n$ is $G$-rainbow.
    \end{THM}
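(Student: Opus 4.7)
The plan is to run the same lopsided Lov\'asz Local Lemma strategy in the Lu--Sz\'ekely framework of random bijections that underlies Theorem \ref{boettcher1}, but with one bad event per pair of (not necessarily incident) edges of $G$, so that the governing hypothesis becomes the \emph{global} $k$-bound rather than the local one.

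Concretely, I would sample a uniform random bijection $\pi\colon V(G)\to V(K_n)$. For each unordered pair $\{e,f\}$ of distinct edges of $G$, let $B_{e,f}$ be the event that $c(\pi(e))=c(\pi(f))$; the image $\pi(G)$ is rainbow exactly when $\bigcap_{e,f}\overline{B_{e,f}}$ occurs. In the Lu--Sz\'ekely framework for bijections, $B_{e,f}$ is canonically independent of every $B_{e',f'}$ for which the vertex sets $V(e)\cup V(f)$ and $V(e')\cup V(f')$ are disjoint, so the dependency degree is bounded by counting pairs of edges meeting a fixed set of at most four vertices: since each vertex of $G$ lies in at most $\Delta$ edges of $G$, one obtains at most $4\Delta \cdot |E(G)| \le 2n\Delta^{2}$ events.

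To estimate $\pr{B_{e,f}}$ I would use that a globally $k$-bounded coloring of $K_n$ has at most $\sum_i \binom{k_i}{2} \le (k-1)\binom{n}{2}/2$ monochromatic pairs of edges (with $k_i\le k$ and $\sum k_i=\binom{n}{2}$). A short counting argument, splitting according to whether $|V(e)\cup V(f)|=3$ or $4$ and contrasting the number of monochromatic configurations with the number of total configurations, yields $\pr{B_{e,f}} \le C k/n^{2}$ for an absolute $C$. Plugged into the symmetric lopsided LLL condition $e\, p\, (D+1)\le 1$, this gives a bound of the form $k\le c\, n/\Delta^{2}$ for some absolute $c>0$.

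The one real obstacle is to push the constant down to $1/51$: the crude estimates above suffice for the shape of the answer but not for the explicit value. Matching the stated constant requires the sharper asymmetric version of Lu--Sz\'ekely, treating the disjoint and vertex-sharing cases separately (they contribute very differently both to $p$ and to the dependency sum), together with a more careful count of monochromatic ordered edge pairs (organising the sum $\sum_i\binom{k_i}{2}$ by the endpoints they use). No genuinely new ingredient beyond the probabilistic framework of \cite{boettcher} should be needed.
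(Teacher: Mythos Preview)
First, note that Theorem~\ref{boettcher2} is not proved in the present paper; it is quoted from \cite{boettcher}. The original argument of B\"ottcher, Kohayakawa and Procacci is based on the cluster-expansion (Fern\'andez--Procacci) refinement of the local lemma rather than on the Lu--Sz\'ekely framework, and that is where the specific constant $51$ comes from.

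Your outline is close in spirit to how the present paper proves its own multipartite analogue (Theorem~\ref{partite2}), but it has a genuine gap. In Theorem~\ref{luszekely} a \emph{canonical} event fixes both a domain \emph{and} an image; your $B_{e,f}$ fixes only the domain $V(e)\cup V(f)$ and is a disjoint union of many canonical events, one per monochromatic target pair in $K_n$. The negative dependency graph of Theorem~\ref{luszekely} joins two canonical events whenever their domains \emph{or their images} meet, so the LLL sum must account for both types of overlap. Your dependency bound ``at most $4\Delta\cdot|E(G)|$ events'' captures only the $G$-side (domain) intersections and omits the $K$-side (image) intersections entirely; compare the proof of Theorem~\ref{partite2}, where these two contributions appear separately as $I_G(B),J_G(B)$ and $I_K(B),J_K(B)$. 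Once you decompose into canonical events and include the image-side count, the argument does go through and yields $k\le c\,n/\Delta^{2}$ for some absolute $c>0$, but as written the LLL hypothesis is not verified, and the constant $1/51$ will not be recoverable by this route without the sharper inequalities of \cite{boettcher}.
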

 In this paper we generalize Theorems \ref{boettcher1} and \ref{boettcher2} in two different directions. 
    
             \subsection{Multipartite graphs}
Motivated by a question of Oriol Serra \cite{serra} asking how
Theorems~\ref{boettcher1} and~\ref{boettcher2} adapt to the bipartite
setting~\cite{serra}, in Section~\ref{sec:partite} we study colored subgraphs in bipartite and,
more generally, multipartite graphs.
We consider $k$-bounded colorings of the complete $m$-partite
graph with $n$ vertices in each class, which we denote by $\Kmn$, and
investigate which subgraphs they contain.
In analogy with Theorems~\ref{boettcher1} and~\ref{boettcher2},
we focus on properly colored and rainbow subgraphs with maximum
degree $\Delta$.
Our results show that for a fixed value of $m$, the dependency between
$k$ and $\Delta$ exhibits a surprising discontinuous behavior when
$\Delta=\Theta(m)$.
The bipartite case (that is, when $m=2$) of these questions is of particular interest due to the following relation to Latin transversals.

A \emph{Latin square} $L$ is an $n \times n$ matrix with entries in $[n]$ such
that each row and each column contain each symbol exactly once.  A \emph{Latin
transversal} in $L$ is a transversal whose cells contain $n$ different 
symbols.  Notice that an $n \times n$ Latin square corresponds to a proper edge
coloring of $K_{n, n}$ with $n$ colors, and a Latin transversal in it
corresponds to a rainbow perfect matching. A famous conjecture of Ryser
\cite{ryser} from 1967 states that every $n \times n$ Latin square for odd $n$
contains a Latin transversal.  As a step towards this conjecture, Erd\H os and
Spencer \cite{spencer} showed that any globally $\frac{n-1}{4e}$-bounded
coloring of $K_{n,n}$ contains a rainbow perfect matching.
In \cite{perarnau}, Perarnau and Serra studied the Latin-transversals problem,
using the framework of Lu and Sz\'ekely for applying local lemma to random injections~\cite{lu}.
One of their results gives an asymptotic count of the rainbow matchings in
globally bounded colorings of $K_{n,n}=K_{2\otimes n}$. Here we study the problem of finding rainbow or properly colored
copies of various graphs in edge-colored $\Kmn$ for $m\ge2$.
Our first result in this direction is the following.
	
     \begin{THM} \label{partite2}
            Suppose $c$ is a globally $\left( \frac{n}{110 \Delta^2} \right)$-bounded coloring of $\Kmn$, and let $G$ be an $m$-partite graph with a partition $V(G) = U_1 \cup U_2 \cup \dots \cup U_m$ satisfying $|U_i| \leq n$, and maximum degree $\Delta$. Then $c$ is $G$-rainbow.
	 \end{THM}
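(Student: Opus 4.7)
The plan is to prove Theorem~\ref{partite2} by a random embedding of $G$ into $\Kmn$ combined with an asymmetric Lov\'asz Local Lemma in the product-space extension of the Lu-Sz\'ekely framework for random injections.

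Write $V(\Kmn) = V_1 \sqcup \cdots \sqcup V_m$ with $|V_i| = n$. Independently for each $i \in [m]$, draw a uniform random injection $\phi_i \colon U_i \to V_i$, producing an embedding $\phi$ of $G$. For each unordered pair $\{f, f'\}\subseteq E(G)$ of distinct edges, and each unordered pair $\{e, e'\}$ of distinct edges of $\Kmn$ with $c(e) = c(e')$ lying in the bipartite blocks prescribed by $f$ and $f'$, define the bad event
\[
A_{f,f',e,e'} := \bigl\{\phi(f) = e \text{ and } \phi(f') = e'\bigr\}.
\]
If no bad event occurs, $\phi(G)$ is a rainbow copy of $G$ in $\Kmn$, so it suffices to verify the LLL condition.

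Two features are central. First, each $\Pr[A_{f,f',e,e'}]$ factors across the parts as $\prod_{i=1}^m (n-s_i)!/n!$, where $s_i := |(V(f)\cup V(f')) \cap U_i|$; this is of order $1/n^4$ when $f,f'$ are vertex-disjoint and of order $1/n^3$ when they share a vertex. Second, two bad events are lopsided-independent whenever the vertex sets $V(f)\cup V(f')$ and $V(g)\cup V(g')$ are disjoint, as a consequence of the standard Lu-Sz\'ekely negative-dependence property for each uniform random injection together with the independence across the $\phi_i$. I would then apply the asymmetric LLL with two weight values $x_{\mathrm{disj}}$ and $x_{\mathrm{ov}}$ scaled to the two probability regimes, verifying $\Pr[A] \le x_A \prod_{B \in D(A)} (1-x_B)$ for each bad event $A$ of each shape.

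The main obstacle is bounding the local sum $\sum_{B \in D(A)} x_B$ sharply enough to yield the stated constant $110$. I would group the dependent events $B$ by (i)~the shared vertex $v \in V(f)\cup V(f')$, (ii)~whether the two edges of $G$ in $B$ are vertex-disjoint or share a vertex, and (iii)~the color class $\tau$ of the same-colored edge pair in $\Kmn$; then combine the global-boundedness $|\tau| \le k = n/(110\Delta^2)$ with the bipartite-block identity $\sum_\tau |\tau \cap B| = |B| \le n^2$ and the degree bound $\deg_G(v) \le \Delta$ to close the estimate. Optimizing $x_{\mathrm{disj}}$ and $x_{\mathrm{ov}}$ against these bounds produces the LLL condition with the constant $110$, yielding $\Pr\bigl[\bigcap_A \overline{A}\bigr] > 0$ and hence the desired rainbow copy of $G$.
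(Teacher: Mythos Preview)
Your overall strategy matches the paper's: random part-respecting injection, canonical bad events for monochromatic edge pairs, Lu--Sz\'ekely negative dependency graph, asymmetric LLL. However, your proposed dependency graph is too sparse, and this is a genuine error, not a cosmetic one.

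You assert that two bad events are lopsided-independent whenever the vertex sets $V(f)\cup V(f')$ and $V(g)\cup V(g')$ are disjoint in $G$. This is false for random injections. The Lu--Sz\'ekely theorem requires connecting two canonical events $\Omega(T_1,U_1,\tau_1)$ and $\Omega(T_2,U_2,\tau_2)$ whenever $T_1\cap T_2\neq\emptyset$ \emph{or} $U_1\cap U_2\neq\emptyset$; disjointness of the domains alone does not suffice. A two-line counterexample: for a uniform injection $\sigma\colon\{1,2\}\to\{a,b,c\}$, the events $\{\sigma(1)=a\}$ and $\{\sigma(2)=a\}$ have disjoint domains but $\Pr[\sigma(1)=a\mid\sigma(2)\neq a]=\tfrac12>\tfrac13=\Pr[\sigma(1)=a]$. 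Consequently, in your LLL verification you must also enumerate the events whose \emph{image} vertices in $\Kmn$ meet those of $A$. The paper does exactly this, splitting the count into $G$-intersecting events (your case) and $K$-intersecting events (the case you omit), and the two contributions are of the same order. Your grouping ``by the shared vertex $v\in V(f)\cup V(f')$'' therefore misses roughly half of the dependent events, and the claimed constant $110$ cannot be extracted from the bounds you list.

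A secondary remark: the paper applies a clean symmetric-threshold form of the asymmetric LLL (each $\Pr[B]\le\frac14$ and $\sum_{B'\sim B}\Pr[B']\le\frac14$), avoiding any optimization over weights $x_{\mathrm{disj}},x_{\mathrm{ov}}$. Once you include the $K$-intersecting events, that simpler form suffices and delivers the constant $110$ directly.
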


We also prove a similar result for properly colored copies of $G$ in locally bounded colorings of $\Kmn$.
    \begin{THM} \label{partite1}
            Suppose $c$ is a locally $\left( \frac{n}{48 \Delta^2} \right)$-bounded coloring of $\Kmn$, and let $G$ be an $m$-partite graph with a partition $V(G) = U_1 \cup U_2 \cup \dots \cup U_m$ satisfying $|U_i| \leq n$, and maximum degree $\Delta$. Then $c$ is $G$-proper.
	 \end{THM}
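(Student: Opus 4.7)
The plan is to use the Lovász Local Lemma (LLL) on a random embedding of $G$, in the Lu--Sz\'ekely form for products of uniform injections that the paper advertises as its main tool. Let $V(\Kmn) = V_1 \sqcup \cdots \sqcup V_m$ and set $K = \lfloor n/(48\Delta^2) \rfloor$. For each $i \in [m]$ independently pick a uniform random injection $\phi_i : U_i \to V_i$ and combine these into an embedding $\phi : V(G) \hookrightarrow V(\Kmn)$. For every pair of edges $e_1 = uv$, $e_2 = uw$ of $G$ sharing a common vertex $u$ (a \emph{cherry}), introduce the bad event
\[
B_{e_1,e_2} \;=\; \bigl\{\, c(\phi(u)\phi(v)) = c(\phi(u)\phi(w)) \,\bigr\}.
\]
Avoiding every $B_{e_1,e_2}$ simultaneously produces a properly coloured copy of $G$, so it suffices to show this happens with positive probability.

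To bound $\Pr[B_{e_1,e_2}]$, write $u \in U_i$, $v \in U_j$, $w \in U_\ell$ (so $j,\ell \neq i$), and condition on $\phi(u) = x$, $\phi(v) = y$. The colour $\gamma = c(xy)$ is then fixed, and local $K$-boundedness of $c$ at $x$ allows at most $K$ vertices $z \in V_\ell$ with $c(xz) = \gamma$. Since distinct $\phi_i$'s are independent and $\phi_\ell$ is itself a uniform injection, the conditional distribution of $\phi(w)$ is uniform on $V_\ell$ (or on $V_\ell \setminus \{y\}$ when $\ell = j$), whence $\Pr[B_{e_1,e_2}] \leq K/(n-1)$. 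For the dependency structure, $B_{e_1,e_2}$ depends only on $\phi$ restricted to the three-vertex set $\{u,v,w\}$; two bad events are joined in the dependency graph exactly when their underlying cherries share a vertex of $G$. Any vertex of $G$ sits in at most $\binom{\Delta}{2}$ cherries as centre and in at most $\Delta(\Delta-1)$ as leaf, giving at most $\tfrac{3}{2}\Delta^2$ cherries per vertex, and hence dependency degree less than $\tfrac{9}{2}\Delta^2$. The symmetric LLL condition $e\,p\,(d+1) \leq 1$ then reduces to $\tfrac{9e}{96}\cdot\tfrac{n}{n-1} \leq 1$, which holds with room to spare.

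The substantive obstacle is not in the numerics but in the fact that our probability space is a \emph{product} of uniform injections, for which the classical independent-LLL does not directly apply. Lu and Sz\'ekely handle the case of a single random permutation via a canonical coupling argument on a suitable configuration graph; the extension needed here must perform this coupling across several symmetric groups simultaneously while preserving the appropriate (lopsided) dependency structure of the cherries. Once this product-space LLL is in place, the probability and dependency estimates above close the proof. Small-parameter cases (e.g.\ $\Delta$ or $n$ a constant, which make $K < 1$) can be absorbed into the constants or handled trivially, since for constant $\Delta$ any bounded-degree $G$ embeds properly into $\Kmn$ by greedy considerations.
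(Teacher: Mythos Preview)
Your high-level strategy is right, but the dependency argument has a genuine gap that the paper's proof resolves in a different way.

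The events $B_{e_1,e_2}$ you define are not canonical in the Lu--Sz\'ekely sense: a canonical event in $\Omega$ prescribes the images of finitely many domain points, whereas your $B_{e_1,e_2}$ is a union, over all monochromatic cherries $(x,y,z)$ in $\Kmn$ with the appropriate part-profile, of the canonical events $\{\phi(u)=x,\,\phi(v)=y,\,\phi(w)=z\}$. The product extension of Lu--Sz\'ekely (Theorem~\ref{luszekely}) furnishes a negative dependency graph only for canonical events, and its edge set joins events that $\mathcal S$-intersect, meaning their underlying partial injections overlap in the domain \emph{or in the image}. Your proposed graph records only domain overlaps (shared vertices in $G$), and that graph is \emph{not} a negative dependency graph for your compound events. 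Concretely, take two vertex-disjoint cherries of $G$ whose three vertices lie in the same triple of parts, and suppose $\Kmn$ contains exactly one monochromatic cherry with that part-profile; then the two bad events are mutually exclusive (the apices would have to collide), so $\Pr[B_1\mid\overline{B_2}]=\Pr[B_1]/\Pr[\overline{B_2}]>\Pr[B_1]$, violating the lopsided condition. No black-box ``product-space LLL'' will rescue the sparse graph you wrote down, because the obstruction lies in the choice of events, not in the space.

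The paper's remedy is to work with the canonical events themselves: one event $B_{u_1\mathyp u_2\mathyp u_3}^{[v_1v_2v_3]}$ for every pair consisting of a cherry in $G$ and a monochromatic cherry in $\Kmn$, each of probability at most $1/(n^2(n-1))$. Theorem~\ref{luszekely} then applies directly, but the resulting negative dependency graph has two kinds of edges: $G$-intersections (the $G$-cherries meet) and $K$-intersections (the $\Kmn$-cherries meet). The paper bounds each of $I_G(B)$ and $I_K(B)$ by $\tfrac{9}{2}\Delta(\Delta-1)n^2k$ and feeds the sum into the asymmetric LLL (Lemma~\ref{asymmetric}). The $K$-intersection count is precisely the piece your sketch omits; once it is included, the weighted neighbourhood doubles and the constant $48$ drops out. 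Your estimate $\Pr[B_{e_1,e_2}]\le K/(n-1)$ and the cherry count $\tfrac{9}{2}\Delta^2$ are morally the $G$-side of this computation, but they are not enough on their own.
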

	    Conversely, we give graphs $G$ and a coloring $c$ simultaneously showing that both statements are optimal up to a constant factor: the coloring is globally bounded, and it still does not contain even a properly colored copy of $G$. Notice that the order of $G$ in the following proposition is fixed (independent on $n$).
	    
	 \begin{PROP} \label{partite3}
	    Suppose $m \geq 2$, $q $ is a prime power, and $n \geq 3q^2$. 
	        There exists an $m$-partite graph $G$ with at most $3q^2$ vertices in each part, maximum degree $\Delta \leq q + 2m$, and a globally $\left(\frac{n}{q^2+q}\right)$-bounded coloring of $\Kmn$ which does not contain a properly colored copy of $G$.
	 \end{PROP}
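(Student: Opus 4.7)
The plan is to use the projective plane $PG(2,q)$ of order $q$ (which exists since $q$ is a prime power) to build both $G$ and the coloring, concentrating all the obstruction between the first two parts $V_1, V_2$ of $\Kmn$. Let $P$ and $L$ denote the point and line sets of $PG(2,q)$, so $|P| = |L| = q^2+q+1$, every point lies on exactly $q+1$ lines, and any two distinct lines meet in exactly one point. I would define $G$ by taking $U_1$ to be a copy of $P$, $U_2$ a copy of $L$, declaring $(p,\ell)$ to be an edge iff $p \in \ell$, and placing a single isolated vertex in each of $U_3, \ldots, U_m$ so that $G$ is literally $m$-partite. Then $|U_1| = |U_2| = q^2+q+1 \le 3q^2$ (since $q \ge 2$), every vertex has degree at most $q+1 \le q+2m$, and all edges of $G$ lie between $U_1$ and $U_2$.

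For the coloring, I would partition $V_2 \subseteq V(\Kmn)$ into $q^2+q$ blocks $Y_1, \ldots, Y_{q^2+q}$ of size $n/(q^2+q)$ each (assuming divisibility for clarity; the general case differs only by trivial rounding). Color each edge $(u,v) \in V_1 \times V_2$ by the pair $(u, b(v))$, where $b(v) \in [q^2+q]$ is the block index of $v$, and color every other edge of $\Kmn$ rainbow. Each color $(u, b)$ then appears exactly $|Y_b| = n/(q^2+q)$ times while each rainbow color appears once, so the coloring is globally $\left(\frac{n}{q^2+q}\right)$-bounded.

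To rule out a properly colored copy of $G$, suppose $\phi$ is such a copy with $\phi(U_i) \subseteq V_i$. Properness is automatic at each $\phi(\ell) \in V_2$: the edges from $\phi(\ell)$ to its $G$-neighbors carry colors $(\phi(p), b(\phi(\ell)))$ for $p \in \ell$, whose first coordinates are distinct by the injectivity of $\phi$. At each $\phi(p) \in V_1$, however, the $q+1$ edges to its $G$-neighbors carry colors $(\phi(p), b(\phi(\ell)))$ with $\ell$ ranging over the lines through $p$, so properness at $\phi(p)$ is equivalent to the block indices $b(\phi(\ell))$ being distinct across those $\ell$. Letting $p$ range over $P$, the map $\beta(\ell) := b(\phi(\ell))$ must therefore be a proper vertex coloring of the graph on $L$ whose edges are pairs of lines sharing a point. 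In $PG(2,q)$ any two distinct lines share a point, so this graph is $K_{q^2+q+1}$, which requires $q^2+q+1$ colors---strictly more than the $q^2+q$ available block labels. This contradiction completes the proof.

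The main conceptual step is picking the right $(G,c)$: once the projective plane supplies $G$ and the block partition supplies $c$, non-embeddability reduces to the elementary bound $\chi(K_{q^2+q+1}) = q^2+q+1$. The only fussy technicality I anticipate is when $q^2+q \nmid n$, which can be handled by taking block sizes $\lfloor n/(q^2+q)\rfloor$ and assigning the residual vertices their own rainbow colors, so that the global bound is not affected.
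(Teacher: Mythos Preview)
Your argument has a genuine gap: you assume that a copy of $G$ in $\Kmn$ satisfies $\phi(U_i)\subseteq V_i$, but the proposition asks you to rule out \emph{every} properly colored copy of $G$, with no restriction on how the parts of $G$ are mapped to the parts of $\Kmn$ (the paper makes this explicit). Your bipartite $G$ has all its edges between $U_1$ and $U_2$, so for $m\ge 3$ one can embed $U_1$ into $V_3$ and $U_2$ into $V_4$ (and the isolated vertices anywhere); since your coloring makes all edges outside $V_1\times V_2$ rainbow, this yields a rainbow---hence properly colored---copy of $G$. Even if you extend the block coloring symmetrically to every pair $V_i\times V_j$, nothing in your $G$ forces all points to land in a single part and all lines in a single part: for $m\ge 3$ one can scatter $P$ across several parts while keeping $L$ in a further part, and your pigeonhole/chromatic-number argument no longer applies. (For $m=2$ your argument does go through, since the incidence graph of $PG(2,q)$ is connected and bipartite, forcing $P$ and $L$ into separate parts up to a swap, and the swap is handled by the point/line duality.)

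The paper addresses exactly this issue. It augments the incidence graph with auxiliary $(m-1)$-cliques $T_i$ and $S_j$, each attached to two consecutive points (respectively lines); since a $K_{m-1}$ together with two further common neighbors must occupy all $m$ parts, this gadget forces any embedding to place all of $P$ in a single part and all of $L$ in a single part. The coloring is correspondingly made symmetric: every $V_i$ is partitioned into $q^2+q$ clusters and each edge $xy$ with $x\in V_i$, $y\in V_{i'}$, $i<i'$, gets color $(x,\text{cluster of }y)$. Then, whichever parts host $P$ and $L$, pigeonhole puts two lines into the same cluster, and the cherry through their common point is monochromatic. This is also why the degree bound in the statement is $q+2m$ rather than $q+1$: the extra $2(m-1)$ comes from the clique gadgets.
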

	 
	 When $\Delta >> m $, the coloring is $O \left(\frac{n}{\Delta^2} \right)$-bounded, matching the bounds of Theorem \ref{partite2} and \ref{partite1}. On the other hand, for $\Delta << m$ the problem becomes fundamentally different. By viewing $\Kmn$ as an almost complete graph, we show that even $O \left( \frac{n}{\Delta}\right)$-bounded colorings contain the required 
	 copies of $G$. This embedding result is also matched by the corresponding construction, and it follows from the results of the second and the third author~\cite{volec}.
    \begin{THM}[\cite{volec}] \label{partite4}
    There exist constants $\alpha$ and $\beta$ such that the following holds. Let $G$ be an $N$-vertex graph of max degree $\Delta$, and $K$ an $N$-vertex graph of minimum degree $N\cdot\left(1 - O\left(\Delta^{-1} \right) \right)$.
        \begin{enumerate} \itemsep-5pt \vspace{-10pt}  
					\item Any locally $ \left(\frac{N}{\alpha \Delta^2} \right)$-bounded coloring of $K$ is $G$-proper.
					\item Any globally $\left( \frac{N}{\beta \Delta^2} \right)$-bounded coloring of $K$ is $G$-rainbow.
        \end{enumerate}
    \end{THM}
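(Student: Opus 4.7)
The plan is to apply the Lov\'asz Local Lemma in the framework of Lu and Sz\'ekely for a uniformly random injection $\phi \colon V(G) \to V(K)$, mimicking the proofs of Theorems~\ref{boettcher1} and~\ref{boettcher2} but with an additional family of bad events accounting for the missing edges of~$K$. A good injection then yields the desired properly colored (resp.\ rainbow) copy of $G$ in $K$.

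For part~(i), the bad events come in two types. A \emph{Type~I} event, indexed by a cherry $u_1 v u_2$ in $G$ and a same-colored cherry $y_1 x y_2$ in $K$, is the event $\phi(v)=x$, $\phi(u_1)=y_1$, $\phi(u_2)=y_2$, which would violate properness. A \emph{Type~II} event, indexed by an edge $uv$ of $G$ and a non-edge $xy$ of $K$, is the event $\phi(u)=x$, $\phi(v)=y$, which would violate the embedding. Part~(ii) is analogous, with Type~I events indexed by arbitrary pairs of edges of~$G$ and arbitrary same-colored pairs in~$K$.

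The heart of the argument is to verify the local lemma condition. A Type~I event has probability $O(N^{-3})$; $K$ contains $O(kN^2)$ same-colored cherries when the coloring is locally $k$-bounded, and each cherry of $G$ shares a vertex with $O(\Delta^2)$ others, so the Type~I contribution to the weighted dependency sum of any fixed bad event is $O(k\Delta^2/N)$. A Type~II event has probability $O(N^{-2})$; the hypothesis on $\delta(K)$ gives at most $O(N/\Delta)$ non-edges at each vertex of $K$, and each edge of $G$ is adjacent to $O(\Delta)$ other edges and lies in $O(\Delta^2)$ cherries, so the Type~II contribution is $O(1)$ with a constant that can be made arbitrarily small by taking $\alpha, \beta$ large. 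The LLL then provides a good injection.

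The main obstacle is to ensure that the new Type~II events do not swamp the classical Type~I analysis. The key cancellation is that the $\Theta(N/\Delta)$ missing edges per vertex of $K$ exactly compensate the $\Theta(\Delta)$-fold incidences within $G$, so the $\Delta$-dependencies cancel. This is precisely the role of the hypothesis $\delta(K) \geq N(1 - O(\Delta^{-1}))$; the rainbow case is treated identically, with the global bound replacing the local one in the count of same-colored pairs in $K$.
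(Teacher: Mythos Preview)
This theorem is quoted from~\cite{volec} and is \emph{not} proved in the present paper; there is therefore no proof here to compare your proposal against. Your sketch is nonetheless the natural extension of the Lu--Sz\'ekely/LLL arguments used in Sections~\ref{sec:partite} and~\ref{sec:hypergraphs}: introduce, alongside the usual ``monochromatic-cherry'' (or ``monochromatic-pair'') events, an extra family of canonical events $\{\phi(u)=x,\ \phi(v)=y\}$ for each edge $uv\in E(G)$ and each non-edge $xy$ of $K$, and check that the added weighted degree in the negative dependency graph is $O(1)$. Your heuristic that the $O(N/\Delta)$ non-edges per vertex of $K$ cancel the $O(\Delta)$ incidences in $G$ is exactly the point of the hypothesis $\delta(K)\ge N(1-O(\Delta^{-1}))$, and the resulting bound is correct.

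One small caution: your paragraph bounding the Type~II contribution is phrased only for a fixed Type~II event (``each edge of $G$ is adjacent to $O(\Delta)$ other edges and lies in $O(\Delta^2)$ cherries''). You must also bound the Type~II neighbours of a fixed Type~I event, splitting into $G$-intersecting and $K$-intersecting cases as in the proofs of Theorems~\ref{partite1} and~\ref{partite2}; both counts come out to $O(N^2)$ Type~II events, each of probability $O(N^{-2})$, so the contribution is again $O(1)$ with a controllable constant. With that case included, the argument goes through.
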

        By applying (i) to $\Delta \leq \delta m$ (where $\delta$ is a small constant), $N =mn$ and $K=\Kmn$, we get that that any locally $\left( \frac{mn}{\alpha \Delta^2}\right)$-bounded edge-coloring of $\Kmn$ is $G$-proper. The analogue is true for rainbow copies of $G$ in globally bounded colorings of $\Kmn$, using (ii). It is shown in \cite{volec} that the bounds are optimal up to a constant factor.
        
     We emphasize that  the definitions and hypotheses in all the theorems do not fix any particular partition or ordering of the parts of $G$. 
     
         \subsection{Properly colored and rainbow copies of hypergraphs}      
        The problem of Bollob\'as and Erd\H os on finding properly colored Hamilton cycles in locally bounded colorings extends naturally to hypergraphs and has recently been studied in \cite{dudek} and \cite{ferrara}.
        We will be looking at edge colorings of $r$-uniform hypergraphs $H$, that is, assignments $c: E(H) \rightarrow \mathbb{N}$. A subhypergraph $G$ of $H$ is said to be \emph{properly colored} if any two overlapping edges of $G$ receive different colors. Furthermore, if every edge of $G$ receives a different color, the subgraph $G$ is \emph{rainbow}.
        We impose the same type of restrictions on the colorings $c$.
          A coloring $c$ is \emph{locally $k$-bounded} if the hypergraph induced by a single color has maximum degree at most $k$, which means that each vertex is contained in at least $k$ edges of a particular color. Formally,  $\Delta\left( H\left[ c^{-1}(i) \right] \right) \leq k$ for all $i\in \mathbb{N}$.  
        We say that $c$ is \emph{globally $k$-bounded} if each color is used at most $k$ times.
        
        Dudek, Frieze and Ruci\'nski \cite{dudek} have studied the existence of properly colored and rainbow Hamilton cycles in colored complete $r$-uniform hypergraphs. There are several different notions of hypergraph cycles. For  $\ell \in [r-1]$, an \emph{$\ell$-overlapping cycle}  $C_n^{(r)}(\ell)$ is an $n$-vertex hypergraph in which, for some cyclic ordering of its vertices, the edges consist of $r$ consecutive vertices, and each two consecutive edges share exactly $\ell$ vertices. For $r=2$ and $\ell=1$, this reduces to the graph cycle. The two extreme cases, $\ell=1$ and $\ell=r-1$, are usually referred to as \emph{loose} and \emph{tight} cycles respectively. 
        
        Given an $n$-vertex $r$-graph $H$, any subgraph of $H$ isomorphic to $\cnrl$ is called an \emph{$\ell$-overlapping Hamilton cycle}. It is easy to show that $\cnrl$ has precisely $\frac{n}{r-\ell}$ edges and therefore we cannot expect $H$ to have one unless $r-\ell$ divides $n$. Generalising the result of Bollob\'as and Erd\H os from~\cite{bollobas}, Dudek, Frieze and Ruci\'nski~\cite{dudek} have shown the following (see also \cite{ferrara} for some further results).
        \begin{THM}[\cite{dudek}] \label{thmdudek}
            For every $\ell \in [r-1]$ there is a constant $\alpha$ (resp. $\beta$) such that if $n$ is sufficiently large and divisible by $r-\ell$, then any locally $\alpha n^{r-\ell}$-bounded (resp. globally $\beta n^{r-\ell}$-bounded) coloring of $K_n^{(r)}$ is $\cnrl$-proper (resp. -rainbow).
        \end{THM}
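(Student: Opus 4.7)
The plan is to prove Theorem~\ref{thmdudek} via a Lov\'asz Local Lemma argument on the space of uniformly random $\ell$-overlapping Hamilton cycles in $\Knr$, using the lopsided LLL for random bijections developed by Lu and Sz\'ekely~\cite{lu}. Fix an auxiliary reference copy of $\cnrl$ on the vertex set $[n]$, with edges $E_1, \ldots, E_{n/(r-\ell)}$, and sample a uniformly random permutation $\pi \colon [n] \to V(\Knr)$. The random Hamilton cycle is then the image of this reference copy under $\pi$; its $i$-th edge is $\pi(E_i)$.

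For the properly colored variant, for each pair of indices $(i,j)$ with $E_i \cap E_j \neq \emptyset$ and each pair of distinct edges $f_1,f_2$ of $\Knr$ with $c(f_1)=c(f_2)$ and $|f_1\cap f_2|=|E_i\cap E_j|$, define the bad event $A_{i,j,f_1,f_2}$ that $\pi$ maps $E_i$ bijectively onto $f_1$ and $E_j$ bijectively onto $f_2$, consistently on the intersection. Each $A_{i,j,f_1,f_2}$ is a canonical event in the Lu--Sz\'ekely sense, fixing $|E_i\cup E_j|$ coordinates of $\pi$, and so has probability of order $n^{-|E_i\cup E_j|}$. For the rainbow variant one allows all index pairs $(i,j)$, not merely overlapping ones, and all color-matched pairs $(f_1,f_2)$ irrespective of intersection size.

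The next step is to bound the dependency degree. Two canonical events are lopsided-dependent only when their underlying partial bijections share either a domain point or a range point; hence $A_{i,j,f_1,f_2}$ depends only on those $A_{i',j',f_1',f_2'}$ for which $f_1'\cup f_2'$ meets $f_1\cup f_2$ or $E_{i'}\cup E_{j'}$ meets $E_i\cup E_j$. Fixing a vertex $v$ of $f_1\cup f_2$, the number of color-matched pairs $(f_1',f_2')$ with $v\in f_1'\cup f_2'$ is controlled by the coloring bound: in the globally $\beta n^{r-\ell}$-bounded case it is at most $O(n^{r-1})\cdot \beta n^{r-\ell}$, and the local $\alpha n^{r-\ell}$-bound yields an analogous estimate by summing the $\alpha n^{r-\ell}$ color class around $v$. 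Multiplying by the $O(1)$ compatible choices of $(i',j')$ with a given overlap pattern produces a dependency degree $D$ for which $e\cdot p\cdot D$ is of order $\alpha$ or $\beta$, so that choosing the constants small enough verifies the LLL criterion and yields the desired cycle with positive probability.

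The main obstacle is the combinatorial bookkeeping. The probability exponent $|E_i\cup E_j|=2r-|E_i\cap E_j|$ depends on the overlap pattern in the reference cycle, which varies with $|i-j|$ when $\ell>1$; one must split the bad events into classes indexed by the intersection size and verify that the corresponding contributions to $\sum_B \Pr[B]$ (over $B$ dependent on a fixed event) collectively remain below the LLL threshold. Additional care is needed to avoid double-counting: each unordered color-matched pair should be charged once, and in the local regime the count must be organised so that a color class contributes only through its degree at the anchor vertex, not through its total size. These estimates are delicate but routine; the conceptual content lies in identifying the correct random-bijection model and the canonical bad events.
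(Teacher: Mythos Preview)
Your proposal is correct and mirrors the paper's method. Note that the paper does not prove Theorem~\ref{thmdudek} separately---it is quoted from~\cite{dudek}---but its proof of the generalization Theorem~\ref{thmhyp1} is precisely the Lu--Sz\'ekely random-injection argument you outline (canonical bad events indexed by a cherry $(e_1,e_2)$ in $G$ together with a color-matching injection $\tau$, dependency counted via $G$-intersecting and $K$-intersecting events, stratified by overlap size $i\in\{0,1,\dots,\ell\}$); specializing to $G=\cnrl$, for which $\Delta_1$ and $\Delta_\ell$ are absolute constants, recovers Theorem~\ref{thmdudek}.
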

        
        Their proof relies heavily on  the cyclic structure of $\cnrl$.  We show that, as in the result of Alon et al.~\cite{alon}, it is actually sufficient to impose restrictions on the degrees in $G$. For a set $S \subset V(H)$, we say the \emph{degree}, or more accurately, $|S|$-degree of $S$ is the number of edges of $H$ containing $S$, denoted by $d(S)$ or $d_H(S)$. For a given $\ell \in \{0,1 , \dots, r-1\}$, the maximum $\ell$-degree of $H$ is the maximum degree over all vertex-subsets of order $\ell$, that is, $\Delta_\ell(H)
        := \max \{d_H(S): S \subset V(H),\, |S|=\ell \}$. For example, the $\ell$-overlapping $r$-uniform cycle has $\Delta_{\ell}=2$ and $\Delta_{\ell+1} = 1$. Note that $\Delta_0(H)$ is just the number of edges of $H$.
        We apply the Lov\'asz Local Lemma, or more specifically, the corresponding framework of Lu and Sz\'ekely \cite{lu} 
        in order to generalize Theorem \ref{thmdudek} to hypergraphs $G$ with bounded maximum $\ell$-degrees.

        \begin{THM} \label{thmhyp1} For a given uniformity $r$ and $\ell \in [r-1]$, there exist positive constants $c_1$ and $c_2$ such that the following holds. Let $G$ be an $r$-uniform hypergraph on at most $n$ vertices satisfying $\Delta_{\ell+1}(G) =1$. Then the following holds.
            \begin{enumerate} \vspace{-10pt} \itemsep-1pt
                \item Any locally $\frac{c_1 n^{r-\ell}}{\Delta_1(G)\Delta_{\ell}(G)}$-bounded coloring of $\Knr$ is $G$-proper.
                \item Any globally $\frac{c_2 n^{r-\ell}}{\Delta_1(G)\Delta_{\ell}(G)}$-bounded coloring of $\Knr$ is $G$-rainbow.
            \end{enumerate}
        \end{THM}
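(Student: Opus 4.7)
The plan is to apply the Lov\'asz Local Lemma in the space of uniformly random injections $\phi \colon V(G) \to V(\Knr)$ identified with $[n]$, following the framework developed by Lu and Sz\'ekely~\cite{lu} for random permutations. A copy of $G$ under $\phi$ is properly colored (respectively rainbow) precisely when none of the bad events $B_{e, e'} := \{c(\phi(e)) = c(\phi(e'))\}$ occur, where $\{e, e'\}$ ranges over unordered pairs of distinct overlapping edges of $E(G)$ (respectively over all distinct pairs). The hypothesis $\Delta_{\ell+1}(G) = 1$ implies that any two distinct edges of $G$ meet in at most $\ell$ vertices, so the parameter $s := |e \cap e'|$ satisfies $s \in [1, \ell]$ in case (i) and $s \in [0, \ell]$ in case (ii).

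To bound $\Pr[B_{e, e'}]$, condition on $f := \phi(e)$ and $T := \phi(e \cap e')$. The image $\phi(e' \setminus e)$ is then uniformly distributed over the $(r-s)$-subsets of $[n] \setminus f$, and $B_{e, e'}$ asserts that $T \cup \phi(e' \setminus e)$ receives color $c(f)$. In the locally $k$-bounded case the set $T$ is nonempty (since $s \ge 1$) and any vertex of $T$ lies in at most $k$ edges of color $c(f)$; in the globally $k$-bounded case the same bound $k$ holds also for $s = 0$. Consequently,
\[
\Pr[B_{e, e'}] \;\le\; \frac{k}{\binom{n-r}{r-s}} \;=\; O\!\left(\frac{k}{n^{r-s}}\right).
\]

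The combinatorial input for counting is $\Delta_s(G) = O(n^{\ell-s} \Delta_\ell(G))$ for $0 \le s \le \ell$, which follows from $\Delta_{\ell+1}(G) = 1$ by repeated use of $\Delta_s(G) \le \tfrac{n-s}{r-s} \Delta_{s+1}(G)$. In particular, a fixed edge $e$ admits $O(n^{\ell-s} \Delta_\ell(G))$ edges $e'$ with $|e \cap e'| = s$. The Lu--Sz\'ekely framework for random injections provides a lopsidependency graph in which $B_{e, e'}$ is adjacent only to those $B_{e_1', e_2'}$ satisfying $(e_1' \cup e_2') \cap (e \cup e') \neq \emptyset$; a vertex count bounds the number of such partners of overlap class $s'$ by $d_{s, s'} = O(\Delta_1(G) \cdot n^{\ell-s'} \Delta_\ell(G))$. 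Applying the asymmetric LLL with weights $x_{e, e'} = \gamma_s = \Theta(k / n^{r-s})$ and using $1 - x \ge e^{-2x}$, the required verification reduces to
\[
\sum_{s' = 0}^{\ell} \gamma_{s'} \cdot d_{s, s'} \;=\; O\!\left( \frac{k \, \Delta_1(G) \, \Delta_\ell(G)}{n^{r-\ell}} \right),
\]
which is a small absolute constant under the assumption $k \le c_i \, n^{r-\ell} / (\Delta_1(G) \Delta_\ell(G))$ for $c_1$ and $c_2$ chosen sufficiently small.

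The main obstacle is a careful implementation of the Lu--Sz\'ekely lopsidependency framework in this hypergraph setting, which is what allows us to treat bad events on disjoint vertex subsets of $V(G)$ as non-adjacent in the dependency graph despite the global injectivity constraint on $\phi$. A secondary subtlety arises in the rainbow case $s = 0$: although individually unlikely, these pairs are numerous, and it is the bounds $|E(G)| = O(n^\ell \Delta_\ell(G))$ and $\Delta_1(G) = O(n^{\ell-1} \Delta_\ell(G))$ that keep their aggregate contribution on par with the overlap classes $s \ge 1$.
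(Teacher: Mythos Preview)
Your overall strategy---random injection, asymmetric LLL, the degree inequality $\Delta_s(G)=O\bigl(n^{\ell-s}\Delta_\ell(G)\bigr)$, and the resulting bound $O\bigl(k\,\Delta_1\Delta_\ell\,n^{\ell-r}\bigr)$ on the LLL sum---matches the paper. The gap is in how you invoke Lu--Sz\'ekely.

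Your bad events $B_{e,e'}=\{c(\phi(e))=c(\phi(e'))\}$ are \emph{not} canonical events in the Lu--Sz\'ekely sense: a canonical event fixes a specific partial injection $\tau$ on $e\cup e'$, whereas your $B_{e,e'}$ is a union over all such $\tau$ with monochromatic image pair. Theorem~\ref{luszekely} says nothing about such unions, and in particular does not give the lopsidependency graph you claim, with edges only when $(e\cup e')\cap(e_1'\cup e_2')\neq\emptyset$ in $V(G)$. Events determined by disjoint domain sets of a random injection are not negatively correlated in general (for $\phi\colon[2]\to[3]$ uniform, $\pr{\phi(1)=1\mid\phi(2)\neq1}=\tfrac12>\tfrac13=\pr{\phi(1)=1}$), so the ``careful implementation'' you flag as the main obstacle is in fact an unproved---and, as stated, false---assertion.

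The paper resolves this by working with the finer canonical events $B^{(e_1,e_2)}_\tau$, one for each specific injection $\tau\colon e_1\cup e_2\to V(\Knr)$ with $c(\tau(e_1))=c(\tau(e_2))$. For these, Theorem~\ref{luszekely} applies directly, but the price is that two canonical events are adjacent whenever their domains \emph{or their images} meet. The paper therefore carries out two separate counts, $I_G(B,i,u)$ for events sharing a vertex of $G$ and $I_K(B,i,v)$ for events sharing a vertex of $\Knr$; your argument has no analogue of the second. Both counts turn out to be $O\bigl(n^r\Delta_1\Delta_i k\bigr)$, so after multiplying by $\pr{B'}=O(n^{-2r+i})$ the final bound is the same as yours---but without the image-side count the LLL hypothesis is simply not verified.
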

        The proof of Theorem~\ref{thmhyp1} is given in Section \ref{sec:hypergraphs}, where
        we also show that the dependencies on $n$ for both parts of the theorem are the best possible. For $l=r-1$, we were also able to show that the dependence on the maximum degrees of $G$ is best possible.
%
%

\section{Lov\'asz Local Lemma and Lu-Sz\'ekely framework for random injections}
Probabilistic methods are very useful for       
constructing combinatorial objects satisfying certain properties. The idea is to show that an object chosen at random satisfies the properties in question with positive probability. This is exactly the statement of the Lov\'asz Local Lemma, and the sufficient conditions are certain mutual correlations between the desired properties.
    
    The Lemma is usually formulated in terms of \emph{bad events} $B_1,\, B_2, \dots,\, B_N$, which correspond to the undesired properties of our object. We say that a graph $D$ with the vertex set $[N]$ is a \emph{dependency graph} for a family of events $\mathcal{B}= \{B_1, \dots, B_N \}$ if for every $i \in [N]$, the event $B_i$ is mutually independent of all the events $B_j \neq B_i$ such that $ij \notin E(D)$. More generally, $D$ is a \emph{negative dependency graph} for $\mathcal{B}$ if for every $i \in [N]$ and every set $J \subset \{j : ij \notin D \}$, it holds that $\pr{B_i \mid \bigwedge_{j \in J} \overline{B_j}} \leq \pr{B_i}$.
    
    The original version of the local lemma, due to Erd\H{o}s and Lov\'asz \cite{lovasz}, used a dependency graph for the set of bad events in order to control the correlations. It was then observed by Erd\H{o}s and Spencer \cite{spencer} that in fact the same proof applies when we capture the correlations using a negative dependency graph. They called this variant the \emph{lopsided Lov\'asz Local Lemma}. We use the following version of the Lemma, often called the Asymmetric Local Lemma. It is 
    proved e.g. in \cite[Chapter 19.3]{molloy} in the non-lopsided form.
    
    \begin{LEMMA}[Asymmetric Lopsided Lov\'asz Local Lemma] \label{asymmetric}
        Let $\mathcal{B} = \{B_1, \dots, B_N \}$ be a set of bad events with a negative dependency graph $D = ([N], \mathcal{E})$. If for all $i \in [N]$,
        $\pr{B_i} \leq \frac{1}{4} \ \text{and} \ \sum_{ij \in \mathcal{E}}\pr{B_j} \leq \frac{1}{4},$
    then $$\displaystyle \pr{\bigwedge_{i \in [N]} \overline{B_i}}>0.$$
    \end{LEMMA}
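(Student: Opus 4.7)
The plan is to reduce the statement to Spencer's form of the asymmetric lopsided Local Lemma and then run the standard induction. Spencer's form says: if reals $x_i \in [0,1)$ satisfy $\pr{B_i} \le x_i \prod_{ij \in \mathcal{E}}(1-x_j)$ for every $i\in[N]$, then $\pr{\bigwedge_i \overline{B_i}} \ge \prod_i (1-x_i) > 0$. The task is therefore twofold: first, produce such $x_i$ from the hypotheses $\pr{B_i}\le 1/4$ and $\sum_{ij\in\mathcal{E}}\pr{B_j} \le 1/4$; second, establish the key conditional bound that underlies Spencer's form.

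\textbf{Step 1: choosing $x_i$.} I would set $x_i := 2\pr{B_i}$, so $x_i \le 1/2$. For $x_i>0$ the Spencer inequality is then equivalent to $\prod_{ij\in\mathcal{E}}(1-x_j) \ge 1/2$. To verify it I use the elementary bound $1-x \ge 4^{-x}$ on $[0,1/2]$ (the function $\ln(1-x)+x\ln 4$ vanishes at both endpoints and is concave there, hence nonnegative). Thus
\[
\prod_{ij\in\mathcal{E}}(1-x_j) \;\ge\; 4^{-\sum_{ij\in\mathcal{E}} x_j} \;=\; 4^{-2\sum_{ij\in\mathcal{E}}\pr{B_j}} \;\ge\; 4^{-1/2} \;=\; \tfrac12.
\]

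\textbf{Step 2: the induction.} By induction on $|S|$ I would prove that
\[
\pr{B_i \;\Big|\; \bigwedge_{j\in S}\overline{B_j}} \;\le\; x_i \qquad \text{for every } S\subseteq[N] \text{ and every } i\notin S.
\]
The base case $S=\emptyset$ is exactly $\pr{B_i}\le x_i$. For the inductive step split $S=S_1\sqcup S_2$ with $S_1:=\{j\in S:ij\in\mathcal{E}\}$ and write
\[
\pr{B_i \;\Big|\; \bigwedge_{j\in S}\overline{B_j}} \;=\; \frac{\pr{B_i\wedge\bigwedge_{j\in S_1}\overline{B_j} \,\big|\, \bigwedge_{j\in S_2}\overline{B_j}}}{\pr{\bigwedge_{j\in S_1}\overline{B_j}\,\big|\,\bigwedge_{j\in S_2}\overline{B_j}}}.
\]
The numerator is bounded by $\pr{B_i\,|\,\bigwedge_{S_2}\overline{B_j}}\le\pr{B_i}$ via the negative-dependency property, applied to the non-neighbor set $S_2$. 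For the denominator I would order $S_1$ arbitrarily, expand by the chain rule, and apply the induction hypothesis at each step (each conditioning set is a strict subset of $S$), obtaining the lower bound $\prod_{j\in S_1}(1-x_j)$. Combining with Step~1 yields the required upper bound $x_i$.

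\textbf{Step 3: conclusion and main obstacle.} One last application of the chain rule gives
\[
\pr{\bigwedge_{i\in[N]}\overline{B_i}} \;=\; \prod_{i=1}^N \pr{\overline{B_i}\,\big|\,\bigwedge_{j<i}\overline{B_j}} \;\ge\; \prod_{i=1}^N (1-x_i) \;>\; 0,
\]
since every $x_i \le 1/2$. The one genuinely delicate point is Step~2: the negative-dependency hypothesis must be invoked in its full ``conditioned on an arbitrary conjunction of non-neighbors'' form, not just pairwise, and this is precisely what distinguishes the lopsided version from the classical Local Lemma. Everything else amounts to careful book-keeping of a well-known argument, matching the paper's citation to a textbook source.
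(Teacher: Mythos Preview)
The paper does not actually prove this lemma; it merely states it and cites Molloy--Reed \cite{molloy} for the non-lopsided form, remarking that the same argument goes through with a negative dependency graph. Your proof is correct and is precisely the standard textbook argument: the choice $x_i = 2\pr{B_i}$ together with the concavity bound $1-x \ge 4^{-x}$ on $[0,\tfrac12]$ verifies Spencer's condition, and the induction in Step~2 is the usual one.

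One cosmetic point: strictly speaking, the induction on $|S|$ should simultaneously establish $\pr{\bigwedge_{j\in S}\overline{B_j}}>0$ so that all the conditional probabilities you write are well-defined. This follows immediately from the chain rule and the bound $\pr{B_j\mid\cdots}\le x_j\le\tfrac12<1$ applied to smaller sets, but you only invoke that step in your final Step~3 rather than threading it through the induction. It changes nothing substantive.
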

    
    We will be using a type of negative dependency graph which is specific to the probability spaces of random injections. It is a slight generalization of a dependency graph first constructed by Lu and Sz\'ekely \cite{lu}. However, we cannot quote their results directly because our probability space is slightly more general. Let $X = X_1 \times \dots \times X_m$ and $Y = Y_1 \times \dots Y_m$, where the parts $X_i$ and $Y_i$ satisfy $|X_i| \leq|Y_i| $. Consider the probability space $\Omega$ generated by picking uniformly random injections $\sigma_i: X_i \rightarrow Y_i$ for $i \in [m]$, and setting $\sigma = \sigma_1 \cup \dots \cup \sigma_m$ to be the induced injection between $X$ and $Y$. Denote the set of such injections by $\mathcal{S}$.
    
     Let $\tau: T \rightarrow U$ be a given bijection between $T \subset X$ and $U \subset Y$. The corresponding \emph{canonical event} $B$  consists of all part-respecting injections $X \rightarrow Y$ which extend  $\tau$, that is
    $$B = \Omega(T,U, \tau) :=\{\sigma \in \mathcal{S} : \sigma(x) = \tau (x) \text{ for all }x \in T\}. $$
Two events $\Omega(T_1, U_1, \tau_1)$ and $\Omega(T_2, U_2, \tau_2)$ \emph{$\mathcal{S}$-intersect} if the sets $T_1$ and $T_2$ intersect or $U_1$ and $U_2$ intersect. A result of Lu and Sz\'ekely \cite{lu} implies that for a set of bad canonical events, the graph whose vertices are the bad events and edges connect exactly the $\mathcal{S}$-intersecting events is a negative dependency graph. Although Lu and Sz\'ekely considered the case $m=1$, their result can be straightforwardly generalized for arbitrary $m$.
    
    \begin{THM} \label{luszekely} 
        Let $\Omega$ be the probability space generated by picking an injection from $\mathcal{S}$ uniformly at random, with the notation defined above. Furthermore, let $\mathcal{B} = \{ B_1,\, B_2, \dots B_N \}$  be some family of canonical events in $\Omega$ and let $D$ be a graph on vertex set $[N]$ and $ij \in E(D)$ if and only if the events $B_i$ and $B_j$ $\mathcal S$-intersect. It holds that $D$ is a negative dependency graph.
    \end{THM}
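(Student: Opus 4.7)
The plan is to generalize the single-coordinate argument of Lu and Sz\'ekely~\cite{lu} to the product space $\mathcal{S}$ via an explicit ``swap'' map. First I would fix a bad canonical event $B_0 = \Omega(T_0, U_0, \tau_0)$ and a collection $\{B_j = \Omega(T_j, U_j, \tau_j) : j \in J\}$ of canonical events that do not $\mathcal{S}$-intersect $B_0$, so $T_j \cap T_0 = \emptyset$ and $U_j \cap U_0 = \emptyset$ for every $j \in J$. Writing $A = \bigcap_{j \in J}\overline{B_j}$, the desired inequality $\pr{B_0 \mid A} \leq \pr{B_0}$ is equivalent to $|B_0 \cap A|\cdot |\mathcal{S}| \leq |B_0|\cdot |A|$. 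I would deduce this from a map $\phi : \mathcal{S} \to B_0$ with two properties: (i)~$\phi$ is surjective with every fibre of size $|\mathcal{S}|/|B_0|$; (ii)~for every $\sigma \in \mathcal{S}$ and every $j \in J$, if $\sigma \in B_j$ then $\phi(\sigma) \in B_j$. Property (ii) gives $\phi^{-1}(B_0 \cap A) \subseteq A$, and combined with the uniform fibre size from (i) this yields $|B_0 \cap A|\cdot |\mathcal{S}|/|B_0| \leq |A|$, which is exactly the target inequality.

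The map $\phi$ would be defined coordinatewise. For each $i \in [m]$, with the notation $T_{0,i} = T_0 \cap X_i$, $U_{0,i} = U_0 \cap Y_i$, $\tau_{0,i} = \tau_0|_{T_{0,i}}$, and $T_{0,i}^* = \sigma_i^{-1}(U_{0,i}) \setminus T_{0,i}$, I would set $\phi(\sigma)_i(x) = \tau_{0,i}(x)$ for $x \in T_{0,i}$, keep $\phi(\sigma)_i(x) = \sigma_i(x)$ for $x \in X_i \setminus (T_{0,i} \cup T_{0,i}^*)$, and on each $x \in T_{0,i}^*$ iterate the partial map $f_i := \sigma_i \circ \tau_{0,i}^{-1}$ starting from $\sigma_i(x) \in U_{0,i}$ until the orbit first leaves $U_{0,i}$; the terminal value becomes $\phi(\sigma)_i(x)$. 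Using the injectivity of $f_i$ together with the elementary observation $\sigma_i(T_{0,i}^*) \subseteq U_{0,i} \setminus \sigma_i(T_{0,i})$, these orbits terminate in distinct elements of $\sigma_i(T_{0,i}) \setminus U_{0,i}$, so $\phi(\sigma)_i$ is a valid injection and $\phi(\sigma) \in B_0$.

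For property (i), the assignment $\sigma \mapsto \bigl(\phi(\sigma),\, (\sigma_i|_{T_{0,i}})_{i \in [m]}\bigr)$ would be shown to be a bijection $\mathcal{S} \to B_0 \times \prod_i \mathrm{Inj}(T_{0,i}, Y_i)$; the inverse reconstructs $\sigma$ from $\phi(\sigma)$ by running each chain backwards from $\sigma_i(T_{0,i}) \setminus U_{0,i}$ to its entry point in $U_{0,i} \setminus \sigma_i(T_{0,i})$. A factorial count gives $|B_0|\cdot \prod_i |\mathrm{Inj}(T_{0,i},Y_i)| = |\mathcal{S}|$, so all fibres have the claimed size $|\mathcal{S}|/|B_0|$. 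Property (ii) is the decisive use of the non-intersection hypothesis: for $x \in T_j \cap X_i$, $\sigma \in B_j$ and $U_j \cap U_0 = \emptyset$ force $\sigma_i(x) = \tau_j(x) \notin U_{0,i}$, hence $x \notin T_{0,i}^*$; since also $x \notin T_{0,i}$, we conclude $\phi(\sigma)_i(x) = \sigma_i(x) = \tau_j(x)$, as required.

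The main technical hurdle is verifying that the chain construction is well-defined and induces a bijection onto $\sigma_i(T_{0,i}) \setminus U_{0,i}$ in each coordinate, which underlies both the validity of $\phi$ and the uniform fibre count. This is a direct extension of the single-coordinate argument of Lu and Sz\'ekely; the product structure introduces no genuine new difficulty, because $\phi$ acts coordinatewise and the notion of $\mathcal{S}$-intersection is defined in terms of the part-respecting data $(T_j, U_j)$.
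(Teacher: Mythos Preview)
Your argument is correct, and it takes a genuinely different route from the paper's proof. The paper (in the Appendix) fixes $B_i = \Omega(T,U,\tau)$ and, for every alternative matching $(T,U',\tau')$ with the \emph{same} domain $T$, builds a measure-preserving automorphism $\pi_\rho$ of $\Omega$ (postcomposition with a permutation $\rho$ of $Y$ supported on $U \cup U'$) that sends $\Omega(T,U',\tau')$ to $B_i$ while fixing every non-conflicting $B_j$; the inequality then falls out by summing over all such alternative images of $T$, which partition $\Omega$. You instead construct a single surjection $\phi:\mathcal{S}\to B_0$ with uniform fibres via a coordinatewise chain/swap, and show it pushes each $B_j$ into itself. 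The key algebraic point that makes your chains tractable --- that $f_i = \sigma_i\circ\tau_{0,i}^{-1}$ depends on $\sigma$ only through $\sigma_i|_{T_{0,i}}$ --- is what makes the fibre count go through, and your verification of property~(ii) is clean.

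As for what each approach buys: the paper's automorphism argument is stated so as to prove the stronger Theorem~\ref{luszekely1}, where only \emph{conflicting} pairs (a subgraph of the $\mathcal S$-intersection graph) need to be joined; your write-up uses both disjointness hypotheses $T_j\cap T_0=\emptyset$ and $U_j\cap U_0=\emptyset$, and so as written yields exactly Theorem~\ref{luszekely}. (In fact your argument extends to the conflict version with one extra line: if $x\in T_j\cap T_0$ then non-conflict gives $\tau_j(x)=\tau_0(x)$, and if $x\in T_j\setminus T_0$ with $\tau_j(x)\in U_0$ then non-conflict forces $x\in T_0$, a contradiction --- so property~(ii) still holds.) Conversely, your approach is more elementary and self-contained: one explicit map rather than a family of automorphisms plus an averaging step.
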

     
    For the sake of completeness, we present a proof of Theorem \ref{luszekely} in the Appendix. The dependency graph treated there and originally proposed by \cite{lu} is even a subgraph of $D$, that is, the actual theorem is slightly more general.
Closely related generalizations of the original results of Lu and Sz\'ekely
formulated in the language of hypergraph matchings have been proven in~\cite{lumohrszek} and~\cite{cano}.
We would also like to mention that generalizations analogous to Theorem~\ref{luszekely} were recently studied from an algorithmic point of view by Harris and Srinivasan~\cite{harris}, and by Harvey and Vondr\'ak \cite{harvey}.

\section{Properly colored and rainbow copies of $m$-partite graphs} \label{sec:partite}
We start our exposition by studying bounded colorings of $\Kmn$, where we seek colored
copies of $m$-partite graphs with maximum degree $\Delta = \Omega(m)$.
In this regime, we heavily rely on the $m$-partite structure of $G$ and $\Kmn$,
and as the main tool apply a multidimensional version of the framework of Lu and Sz\'ekely.
Throughout the section, we omit the floor  and ceiling signs whenever it is not critical.

For a slight convenience, we deal first with properly colored subgraphs in locally bounded colorings.
    \begin{proof}[Proof of Theorem \ref{partite1}]
        Let $G$ be an $m$-partite graph on vertex set $U = U_1 \cup U_2 \cup \dots U_m$ such that $|U_i| \leq n$ and no part $U_i$ contains an edge. Let $\Delta$ be the maximum degree of $G$, and $c$ a locally $k$-bounded edge-coloring of $\Kmn$, where $k = \frac{n}{48\Delta^2}$. We take the vertex set of $\Kmn$ to be $V = V_1 \cup V_2 \cup \dots V_m$ with $|V_i|=n$ for all $i$. We claim that there exists a properly colored copy of $G$ in $\Kmn$, even with the additional constraint that each part $U_i$ is mapped into $V_i$.
        
        Let $\mathcal S$ be the set of injections $f^\prime: U\rightarrow V$  satisfying $f^\prime(U_i) \subset V_i$ for all $i\in[m]$, $\Omega$ the uniform probability space on $\mathcal S$, and $f$ a random injection drawn from $\Omega$. To index the bad events, we set up some notation. As first, fix an ordering of $U$. A triple $u_1 \mathyp u_2 \mathyp u_3$ of distinct vertices of $G$ denotes a \emph{cherry} in $G$, that is a path of length two with the middle vertex $u_2$. We will only be considering cherries $u_1 \mathyp u_2 \mathyp u_3$ with $u_1 < u_3$. Similarly, $[v_1 v_2 v_3]$ denotes a monochromatic cherry in colored $\Kmn$, that is, a triple for which $v_1v_2$ and $v_2 v_3$ are edges that satisfy $c(v_1 v_2) = c(v_2 v_3)$. Note that we only require $v_1 \neq v_3$ and not an ordering between them, so that the bijection that maps $u_i$ to $v_i$ for $i = 1, 2 ,3$ is counted exactly once.
        
        The bad events will be all events of form
        $$B_{u_1 \mathyp u_2 \mathyp u_3}^{[v_1 v_2 v_3]} = \{f \in {\mathcal S}: f(u_i) = v_i \text{ for } i= 1,2,3 \},$$
        for all choices of cherries $u_1 \mathyp u_2 \mathyp u_3$ and $[v_1 v_2 v_3]$ satisfying the conditions above. The set of bad events is denoted by $\mathcal{B}$.
        
        As granted by Lemma \ref{luszekely}, the graph on vertex set $\mathcal{B}$ and edges between $B_{u_1 \mathyp u_2 \mathyp u_3} ^{[v_1 v_2 v_3]}$ and $B_{u_1^\prime \mathyp u_2^\prime \mathyp u_3^\prime}^{[v_1^\prime v_2 ^\prime v_3^\prime]}$ whenever the two events $\mathcal S$-intersect is a negative dependency graph. By definition, this occurs only when the corresponding cherries $\{ u_1,  u_2, u_3\}$ and $\{ u_1^\prime,  u_2^\prime, u_3^\prime\}$, or  $\{ v_1,  v_2, v_3\}$ and $\{ v_1^\prime,  v_2^\prime, v_3^\prime\}$ intersect. If the prior occurs, we call the events $G$-intersecting, and otherwise we call them $K$-intersecting.
        
        Each bad event $B = B_{u_1 \mathyp u_2 \mathyp u_3} ^{[v_1 v_2 v_3]}$ satisfies $\pr{B} \leq \frac{1}{n^2(n-1)} < \frac{1}{4}$, since there are always $n$ possibilities for choosing the image of $u_2$, and at least $n(n-1)$ possibilities for embedding the leaves $u_1$ and $u_3$ (as they might lie in the same part $V_i$). By Lemma \ref{asymmetric} it remains to prove for $B \in \mathcal{B}$
        \begin{equation} \label{sumpb1}
            \sum_{\substack{
            B^\prime \in \mathcal{B} \\
            B^\prime \ {\mathcal S}\text{-intersects } B
            }} \pr{B^\prime} \leq \frac{1}{4}.
        \end{equation}
        Upon showing that, with positive probability none of the bad events occur, in which case $f$ yields a properly colored embedding of $G$ into $\Kmn$.
        
        It remains to count the $\mathcal S$-intersecting events. Fix an event $B = B_{u_1 \mathyp u_2 \mathyp u_3}^{[v_1 v_2 v_3]}$. First we count the number $I_G(B)$ of events $B^\prime = B_{u_1^\prime \mathyp u_2^\prime \mathyp u_3^\prime}^{[v_1^\prime v_2 ^\prime v_3^\prime]}$ which are $G$-intersecting with $B$. Without loss of generality, $u_1 \in \{ u_1^\prime,  u_2^\prime, u_3^\prime\}$ (note that we allow more than one vertex in the intersection).
        We have two cases:
        \begin{enumerate} \itemsep-5pt \vspace{-10pt}
            \item If $u_1$ is a leaf of $u_1^\prime \mathyp u_2^\prime \mathyp u_3^\prime$, then we have $\Delta$ choices for the apex $u_2^\prime$ and $\Delta-1$ choices for the second leaf as a neighbor of $u_2^\prime$ in $G$. The ordering of the cherry $u_1^\prime \mathyp u_2^\prime  \mathyp u_3 ^\prime$ is then fixed by the requirement $u_1 ^\prime < u_3 ^\prime$.
            \item If $u_1$ is the apex $u_2^\prime$, then there are $\frac{\Delta(\Delta-1)}{2}$ choices for the two leaves, whose ordering is again predetermined.
        \end{enumerate} \vspace{-10pt}
        Altogether, this gives $\frac{3}{2}\Delta(\Delta-1)$ choices for $u_1^\prime \mathyp u_2^\prime \mathyp u_3^\prime$. Each cherry in $G$ can be mapped to at most $n^2 k$ monochromatic cherries in $\Kmn$ in a part-respecting manner - there are $n^2$ ways to choose $v_1^\prime$ and $v_2^\prime$ inside the parts corresponding to $u_1^\prime$ and $u_2^\prime$, and then further $k$ choices of $v_3^\prime$ satisfying $c(v_1^\prime v_2 ^\prime) = c(v_2^\prime v_3^\prime)$. Summing up and multiplying by 3 to account for the fact that the intersection may occur at $u_1$, $u_2$ or $u_3$, we conclude 
        \begin{equation} \label{igb}
            I_G(B) \leq \frac{9}{2} \Delta(\Delta-1)n^2 k.
        \end{equation}
        
        Next, denote the number of events which $K$-intersect $B$ by $I_K(B)$. As before, let $B^\prime = B_{u_1^\prime \mathyp u_2^\prime \mathyp u_3^\prime}^{[v_1^\prime v_2 ^\prime v_3^\prime]}$ be such an event and suppose $v_1 \in \left \{v_1^\prime,\, v_2 ^\prime,\, v_3^\prime \right \}$. There are $n$ ways of choosing the preimage of $v_1$ in the corresponding part of $G$. Again, we distinguish two cases.
        \begin{enumerate} \itemsep-5pt \vspace{-10pt}
            \item If $v_1 = v_2^\prime$ is the apex of $[v_1^\prime v_2 ^\prime v_3^\prime]$, then there are $\frac{\Delta(\Delta-1)}{2}$ ways of choosing the leaves of $u_1^\prime$ and $u_3^\prime$.
            \item If $v_1$ is a leaf of $[v_1^\prime v_2 ^\prime v_3^\prime]$, then there are $\Delta(\Delta-1)$ ways to complete the preimage of $v_1$ into a cherry ${u_1^\prime \mathyp u_2^\prime \mathyp u_3^\prime}$ in $G$ and the condition $u_1^\prime < u_3^\prime$ determines whether $v_1 = v_1^\prime$ or $v_1 = v_3^\prime$.
        \end{enumerate} \vspace{-10pt}
        Having chosen ${u_1^\prime \mathyp u_2^\prime \mathyp u_3^\prime}$, there are $nk$ ways to complete the monochromatic cherry $[v_1^\prime v_2 ^\prime v_3^\prime]$ in $\Kmn$, which gives at most $\frac{3}{2}\Delta(\Delta-1)n^2k$ bad events $\mathcal S$-intersecting $B$ at $v_1$. Since the intersection can also occur at $v_2$ and $v_3$, the bound is
        \begin{equation} \label{ikb}
            I_K(B) \leq \frac{9}{2} \Delta(\Delta-1)n^2 k.
        \end{equation}
        
        Introducing these bounds, using $\frac{n}{n-1}\leq \frac 43$ for $n \geq 4$ and $k \leq \frac{n}{48\Delta^2}$, we get
        $$
            \sum_{\substack{
            B^\prime \in \mathcal{B} \\
            B^\prime \ {\mathcal S}\text{-intersects } B
            }} \pr{B^\prime} \leq 2\cdot \frac{9}{2} \Delta(\Delta-1)n^2 k \cdot \frac{1}{n^2(n-1)} < \frac{12\Delta^2 k}{n} \leq \frac{1}{4},
        $$
        which proves \eqref{sumpb1}.
     \end{proof}
    We continue with the proof of our result on rainbow subgraphs in globally bounded colorings.
		\begin{proof}[Proof of Theorem \ref{partite2}]
				We follow the outline of the previous proof, but we now have to avoid the events that any two edges in our embedding of $G$ carry the same color. Recall, $G$ is an $m$-partite graph on vertex set $U = U_1 \cup U_2 \cup \dots U_m$, that is, parts $U_i$ are independent sets with $|U_i| \leq n$. The maximum degree of $G$ is denoted by $\Delta$. Let $c$ be a globally $k$-bounded edge-coloring of $\Kmn$, where $k = \frac{n}{110 \Delta^2}$. We take the vertex set of $\Kmn$ to be $V = V_1 \cup V_2 \cup \dots V_m$ with $|V_i|=n$ for all $i$. We claim that there exists a rainbow copy of $G$ in $\Kmn$, even with the additional constraint that each part $U_i$ is mapped into $V_i$.
        
        As before, let $\mathcal{S}$ denote the set of injections $f^\prime: U\rightarrow V$  satisfying $f^\prime(U_i) \subset V_i$ for all $i$. Let $f$ be an injection chosen uniformly at random from $\mathcal{S}$. The set of bad events is now extended to $\mathcal{B} \cup \mathcal{C}$, where $\mathcal{B}$ and $\mathcal{C}$ are as follows. As before,  $\mathcal{B}$ is the set of events of form $\baduv$. Recall that $\cheg$ is a cherry in $G$ with apex $u_2$ and leaves satisfying $u_1 < u_3$, and $\chek$ is a monochromatic cherry in $\Kmn$ with no specified ordering between the leaves.

				Similarly, a \emph{quadruple} $\quag$ in $G$ denotes two \emph{disjoint} edges $u_1u_2$ and $u_3 u_4$ in $G$ such that $u_1 < u_2$, $u_3 < u_4$ and $u_1 < u_3$. A monochromatic \emph{quadruple} in $\Kmn$ is a 4-tuple  $[v_1 v_2 v_3 v_4]$  of distinct vertices $v_i \in V$ satisfying $c(v_1 v_2) = c(v_3 v_4)$.  Subject to such choices of $u_i$ and $v_i$, we define  $\mathcal{C}$ be the set of events of form
				$$\caduv = \{f \in {\mathcal S}: f(u_i) = v_i \text{ for } i= 1,\dots, 4 \}.$$
				

        As granted by Lemma \ref{luszekely}, the graph on vertex set $\mathcal{B} \cup \mathcal{C}$ and edges between $B$ and $B^\prime $ whenever the two events $\mathcal S$-intersect is a negative dependency graph. By definition, this occurs only when the cherries or quadruples corresponding to $B$ and $B^\prime$ intersect.
        
				Just like before, each event $B$ in $\mathcal{B}$ satisfies $\pr{B} \leq \frac{1}{n^2(n-1)} < \frac{1}{4}$, whereas events $B \in \mathcal{C}$ satisfy a stronger inequality $\pr{B} \leq \frac{1}{n^2 (n-1)^2} < \frac{1}{4}$. Equality is attained when two pairs of vertices lie in the same part of $G$, say $u_1, u_3 \in U_i$ and $u_2, u_4 \in U_j$, and otherwise the probability is strictly smaller. By Lemma \ref{asymmetric} it remains to prove for $B \in \mathcal{B} \cup \mathcal{C}$,
        \begin{equation} \label{sumpb3}
            \sum_{\substack{
            B^\prime \in \mathcal{B} \cup \mathcal{C} \\
            B^\prime \; {\mathcal S}\text{-intersects } B
            }} \pr{B^\prime} \leq \frac{1}{4}.
        \end{equation}
        Upon showing that, with positive probability none of the bad events occur, in which case $f$ yields a rainbow embedding of $G$ into $\Kmn$.
        
       Consider a bad event $B \in \mathcal{B} \cup \mathcal{C}$. 	We denote the number of events in $\mathcal{B}$ which $G$-intersect (resp. $K$-intersect) $B$ by $I_G(B)$ (resp. $I_K(B)$). Analogously, $J_G(B)$ (resp. $J_K(B)$) denotes the number of events in $\mathcal{C}$ which $G$-intersect (resp. $K$-intersect) $B$. If $B$ has the form $\baduv$, fix $u \in \{u_1,\, u_2, u_3 \}$ and $v\in \{v_1,\, v_2, v_3 \}$. Otherwise, if $B = \caduv$, fix $u \in \{u_1,\, u_2, u_3, u_4 \}$ and $v\in \{v_1,\, v_2, v_3, v_4 \}$. Either way, each of the two vertices can be chosen in at most $4$ ways (as opposed to $3$ in the previous proof). We will be counting events that $G$-intersect or $K$-intersect $B$ at $u$ or $v$ respectively, and then multiply the result by $4$ to take into account all the possibilities. The bounds we obtain
       are valid in both cases, when $B \in \mathcal{B}$ as well as $B \in \mathcal{C}$.
			
			For the events $B^\prime \in \mathcal{B}$ of form $\baduvp$ $\mathcal S$-intersecting $B$, we have the same count as in equations \eqref{igb} and \eqref{ikb}, with the increase by a factor of $\frac{4}{3}$ as explained above. It follows that
			\begin{align}
			\label{igb1}
			    I_G(B)&\leq 4 \cdot \frac{3}{2}\Delta(\Delta -1)n^2 k = 6 \Delta(\Delta -1)n^2 k \text{  \quad and}\\
			    I_K(B) &\leq 6 \Delta(\Delta -1)n^2 k.
			\end{align}
			
		To bound $J_G(B)$, fix an event $B^\prime = \caduvp$ with $u \in \{u_1^\prime, u_2^\prime, u_3^\prime, u_4^\prime \}$. In counting the choices for vertices $u'_i$ and $v'_i$, we switch back and forth between $G$ and $\Kmn$ to get the best bounds. There are $\Delta$ possible choices for the vertex $u^\prime$ neighbouring $u$, and $n^2$ choices for the images $v$ and $v^\prime$ of $u$ and $u^\prime$ in the corresponding parts of $G$. Then we fix a pair $\{\tilde{v}, \tilde{v}^\prime \}$ such that $c(\tilde{v} \tilde{v}^\prime) = c(v v^\prime)$, which can be done in $k$ ways, since the coloring is globally $k$-bounded. The preimage of $\tilde{v}$ and $\tilde{v}^\prime$ can again be chosen in  at most $n \Delta$ ways. The ordering of vertices in $G$ now uniquely determines the event $\caduvp$. Putting the numbers together, and taking into account at most four choices of $u$ from $B = \baduv$ or $B = \caduv$ gives
		\begin{equation}
		 \label{jgb}
		 J_G(B) \leq 4 n^2\Delta k n\Delta = 4\Delta^2 n^3k.
		\end{equation}
        
        To control $J_K(B)$, once again fix an event $B^\prime = \caduvp$ satisfying $v \in \left\{v_1^\prime, v_2^\prime, v_3^\prime, v_4^\prime \right \}$. We start by choosing the preimage $u$ of $v$ among the $n$ possible vertices. From there, the argument is the same as for $J_G(B)$, implying $
        J_K(B) \leq 4\Delta^2 n^3 k.$ 
        
        Summing up and using $k \leq \frac{n}{110 \Delta^2}$, we get for $n\geq 5$
        \begin{align*}
        \sum_{\substack{
            B^\prime \in \mathcal{B} \cup \mathcal{C}  \\
            B^\prime \ {\mathcal S}\text{-intersects } B
            }} \pr{B^\prime} 
            &\leq   \frac{12 \Delta(\Delta-1)n^2k}{n^2(n-1)} + \frac{8 \Delta^2 n^3 k}{n^2(n-1)^2} \\
            &< \left(12\cdot \frac{5}{4} + 8 \cdot \frac{25}{16} \right) \frac{\Delta^2 k}{n} \leq\frac{1}{4}.
        \end{align*}    
            \end{proof}
    To conclude this section, we show that Theorems \ref{partite2} and \ref{partite1} 
    are the best we can hope for up to a constant factor. The proof relies on some ideas from \cite{volec}.
    \begin{proof}[Proof of Proposition \ref{partite3}]
        As stated, let $q \geq m$ be a prime power, and we let $V(G) = P \cup L \cup T_1 \cup T_2 \cup \dots T_{q^2+q} \cup S_1 \cup S_2 \cup \dots S_{q^2+q}$, where $P = \{p_0, p_1,\dots p_{q^2+q} \}$ and $L = \{\ell_0,\, \ell_1,\, \dots \ell_{q^2+q} \}$ correspond to the points and the lines of the finite projective plane $PG(2, q)$, respectively. Moreover, for any $i \in \left[q^2+q\right]$, both the set $T_i$ and the set $S_i$ induces a clique of order $m-1$. We join $p_0$ and $p_1$ to all the vertices of $T_1$, $p_1$ and $p_2$ to all the vertices of $T_2$, and so on, up to $p_{q^2+q-1}$, $p_{q^2+q}$ and $T_{q^2+q}$. Analogously, we connect the lines $\ell_{j-1}$ and $\ell_{j}$ to the vertices of the clique $S_j$ for every $j\in\left[q^2+q\right]$. Finally, we join $p_i$ to $\ell_j$ when the point $p_i$ belongs to the line $p_j$. Note that our construction forces any embedding of $G$ into $\Kmn$ (regardless of the coloring) to place points into the same part of $\Kmn$, and lines into the same part. Indeed, suppose there are two points $p_i, p_{i+1}$ which are placed into two different parts of $\Kmn$. Then, since they both connected to a clique $T_{i+1}$ of size $m-1$, the vertices of this clique can not be placed into the remaining $m-2$ parts.
				
				We now show that $G$ is indeed $m$-partite, i.e. we can partition the vertices of $G$ into independent sets $U_1,\dots U_m$. To split $P \cup T_1 \cup T_2 \cup \dots T_{q^2+q}$, we set $P \subset U_1$, and parts $U_2,\dots U_m$ contain one vertex of each clique $T_i$. Vertices in $L \cup  S_1 \cup S_2 \cup \dots S_{q^2+q}$ are split in the analogous way, with say $L \subset U_2$. And indeed, each part of $G$ has at most $2(q^2+q+1) \leq 3q^2$ vertices. Finally, the maximum degree of $G$ is $\Delta(G) = (q+1) + (2m-2) \leq q+2m$, as stated.
        
        The locally $\ceil{\frac{n}{q^2+q}}$-bounded coloring of $\Kmn$ which we now describe is an analogue of the coloring from \cite[Lemma 29]{volec}, motivated by the canonical colorings of $K_n$. Let $V_1, V_2, \dots V_m$ be the parts of $\Kmn$, and let each part $V_i$ be split into $q^2+q$ clusters $V_{ij}$ as evenly as possible. If $x$ and $y$ are vertices of $\Kmn$ with $x \in V_{ij}$, $y \in V_{i^\prime j^\prime}$ and $i < i^\prime$, then the edge $xy$ gets color $(x, V_{i^\prime j^\prime})$. In other words, if we order the parts $V_1,\dots, V_m$ vertically downward, the colors are indexed by $(x, V_{i^\prime j^\prime})$, where $x$ lies strictly above $V_{i^\prime}$ and each downward \emph{fan} from $x$ to a cluster $V_{i^\prime j^\prime}$ gets its own unique color. The resulting coloring $c$ is globally bounded by the order of the clusters, that is by $\frac{n}{q^2+q}$.
        
        Suppose there is a properly colored embedding $f: V(G) \longrightarrow V(\Kmn)$. Then $f(P) \subset V_i$ and $f(L) \subset V_j$, so, without loss of generality, $i < j$. By the pigeonhole principle, two lines, say $\ell_0$ and $\ell_1$, lie inside the same cluster. But then the cherry formed by $\ell_0$, $\ell_1$ and the representative of their intersection in $PG(2, q) \subset G$ is monochromatic by construction of $c$.
    \end{proof}
    
    \section{Properly colored and rainbow copies of bounded-degree hypergraphs} \label{sec:hypergraphs}
    This section is concerned with bounded edge-colorings of $r$-uniform hypergraphs. As in Theorems~\ref{boettcher1}, \ref{boettcher2}, \ref{partite2}
and \ref{partite1}, we establish upper bounds on $k$ so that any locally (globally) $k$-bounded coloring of $\Knr$ contains a properly colored (rainbow) copy of a given hypergraph $G$. This result is given in Theorem~\ref{thmhyp1}.
We also complement this result by showing that the dependence on $n$ in the bounds on $k$ is asymptotically the best possible.

    \begin{proof}[Proof of Theorem \ref{thmhyp1}] 
        Let $G$ be as in the statement, an $r$-uniform hypergraph satisfying $\Delta_{\ell+1}(G) =1$ and $\Delta_i(G) = \Delta_i$ for $0 \leq i \leq \ell$. The vertex set of $G$ is $U$, of order at most $n$, and the vertex set of the complete $r$-graph $\Knr$ is $V$. We consider a coloring $c$ of $\Knr$, to which we impose different local or global restrictions.
        
        We set up some notation. In the hypergraph setting, it is more convenient for us to index the bad events in terms of edges. Firstly, fix a total ordering on the subsets of $U$ of order $r$, which induces an ordering on the edges of $G$. For $i \in \{0, 1,\dots, \ell \}$, a \emph{cherry} in $G$ of overlap $i$ is an ordered pair of edges $(e_1,\, e_2)$ satisfying $e_1 < e_2$ and $|e_1 \cap e_2| =i$. A vertex $u \in e_1 \cap e_2$ is called an \emph{apex vertex}. 

Let $\mathcal S$ be the set of all injections from $U$ to $V$.
 
    We will now define the canonical bad events in the uniform probability space on $\mathcal S$. Let $(e_1, e_2)$ be a cherry of overlap $i$ and $\tau: e_1 \cup e_2 \rightarrow V$ an injection satisfying $c(\tau (e_1)) = c(\tau(e_2))$. Notice that since $\tau$ is injective, the images $\tau(e_1) = \{\tau(v): v \in e_1 \}$ and $\tau(e_2)$ are edges of $\Knr$, and the colours $c(\tau (e_1))$, $c(\tau(e_2))$ are well-defined. Moreover, $|\tau(e_1) \cap \tau(e_2)| = i$ is preserved. We define the corresponding canonical bad event (of overlap $i$)

        $$ \badet = \left \{f\in{\mathcal S}: f(u) = \tau(u) \text{ for all } u \in e_1 \cup e_2 \right \}.$$
        We denote the set of all bad events of overlap $i$ by $\mathcal{B}_i$. Note that in all of the definitions above, we also allow $i=0$, corresponding to the case of disjoint edges.
        
We first consider a locally $k$-bounded coloring $c$ and prove
statement (i) - that $c$ is $G$-proper for
$k = O\left(\frac{n^{r-\ell}}{\Delta_1 \Delta_\ell} \right)$.
Let $\mathcal{B} = \mathcal{B}_1 \cup \dots \cup \mathcal{B}_{\ell}$ be the set
of bad events.  Lemma \ref{luszekely} grants that the graph on vertex set
$\mathcal{B}$ and edges between $\badet$ and $\badetp$ whenever the two events
$\mathcal S$-intersect is a negative dependency graph. By definition, this
occurs only when $e_1^\prime \cup e_2^\prime$ intersects $e_1 \cup e_2$, or
$\tau^\prime\left(e_1^\prime\right) \cup \tau^\prime\left(e_2^\prime\right)$ intersects $\tau(e_1) \cup \tau(e_2)$.
If the prior occurs, we call the events $G$-intersecting, and otherwise we call
them $K$-intersecting. In particular, if $u$ lies in
$\left(e_1^\prime \cup e_2^\prime\right) \cap \left(e_1 \cup e_2\right)$,
we say the two events $G$-intersect at $u$, and analogously we say the two
events $K$-intersect at $v$, if
$v$ lies in the intersection of $\tau^\prime\left(e_1^\prime\right) \cup \tau^\prime\left(e_2^\prime\right)$
and $\tau\left(e_1\right) \cup \tau\left(e_2\right)$.
        
        Each bad event $B = \badet$ satisfies $\pr{B} \leq \frac{1}{n(n-1)\dots (n-2r+\ell+1)} < \frac{1}{4}$. Equality holds when $B$ is of overlap $\ell$, and otherwise the probability is strictly smaller.  By Lemma \ref{asymmetric}, if every $B \in \mathcal{B}$ satisfies
        \begin{equation} \label{sumpb0}
            \sum_{\substack{
            B^\prime \in \mathcal{B} \\
            B^\prime \ {\mathcal S}\text{-intersects } B
            }} \pr{B^\prime} \leq \frac{1}{4},
        \end{equation}
        then with positive probability all events in $\mathcal{B}$ are avoided, i.e. $f(G)$ is a properly colored copy of $G$ in $\Knr$.
        
    Fix an event $B=\badet$ and vertices $u \in e_1 \cup e_2$, $v \in \tau(e_1) \cup \tau(e_2)$. We define $I_G(B, i, u)$ to be the number of events $B^\prime$ of overlap $i$ satisfying $u \in e_1^\prime \cup e_2 ^\prime$. Similarly, $I_K(B, i, v)$  is the number of events $B^\prime$ of overlap $i$ satisfying $v \in \tau^\prime(e_1^\prime) \cup \tau^\prime(e_2 ^\prime)$. Note that there are at most $2r-i\le2r$ choices of the vertex $u$ and similarly $2r-i\le2r$ choices of $v$.
    
    \textbf{Claim 1.} For all $i \in [\ell]$,
    \begin{equation*} \label{igbl}
        I_G\left(B, i, u\right) = O\left(n^{r}\Delta_1 \Delta_i k\right) \text{ \quad and \quad} 
        I_K(B, i, v) = O\left(n^{r}\Delta_1 \Delta_i k \right).
    \end{equation*}
    To see the first inequality, there are at most $\Delta_1$ ways of choosing an edge $e \in E(G)$ containing $u$, and at most $\binom{r}{i}$ ways to select the apex vertices from $e$. The vertices of $e$ can be mapped to any $r$-tuple of vertices in $V$, for which there are $n(n-1)\dots (n-r+1) \leq n^r$ choices.  From there, there are at most $\Delta_i$ ways to choose the other edge in $G$ forming a cherry of overlap $i$ with $e$, and whether $e = e_1 ^\prime$ or $e = e_2 ^\prime$ is determined by the ordering of edges. Without loss of generality, $e = e_1^\prime$.  Finally, as $\tau^\prime(e_1^\prime)$ is fixed, there are at most $k$ choices for $\tau^\prime(e_2^\prime)$ which form a monochromatic cherry in $\Knr$ with it. The number of orderings of $\tau^\prime(e_2^\prime)$ is suppressed into the constant, so altogether $I_G(B, i, u) = O\left(n^{r}\Delta_1 \Delta_i k\right)$.
    
    For the second inequality, there are $n$ ways to select the preimage $\tau^{-1}(v) \in U$, and $\Delta_1$ ways to extend it into an edge $e$ of $G$. Then we fix the image $\tau(w)$ for all vertices $w \in e \setminus\{\tau^{-1}(v)\}$, for which there are at most $(n-1)(n-2)\dots (n-r+1) \leq n^{r-1}$ ways. Now we are in the same position as above, so there are further $O(\Delta_i k)$ ways to completely determine $B^\prime$. Multiplying the different counts, we get $I_K(B, i, u) = O\left(n^{r}\Delta_1 \Delta_i k\right)$.
    
    \textbf{Claim 2.} For all $i \in [\ell]$,
        \begin{equation} \label{sumpb2}
                \sum_{\substack{
                B^\prime \in \mathcal{B}_i \\
                B^\prime \ \mathcal{S}\text{-intersects } B
                }} \pr{B^\prime} = O\left(n^{\ell-r} \Delta_1 \Delta_{\ell} k\right).
        \end{equation}
        Indeed, it is easy to see that $\Delta_{i-1}(G) \leq (n-i+1) \Delta_i(G)$ for every $i$ and every hypergraph $G$.
         
            Namely, for a vertex set $A \subset U$ with $|A|=i-1$ and any vertex $v \in U \setminus A$,  $d_G(A \cup \{v \}) \leq \Delta_i(G)$ holds by the definition of $\Delta_i$. Summing up over all the choices of $v$ gives $d_G(A) \leq (n-i+1)  \Delta_i(G)$.
        
        Iterating the inequality yields $\Delta_{i}(G) = O\left(n^{\ell-i} \Delta_{\ell}\right)$ for $i \leq \ell$. An event of overlap $i$ has probability exactly $\frac{1}{n(n-1)\dots (n-2r+i+1)} = O\left(n^{-2r+i}\right)$.
        Summing up $I_G(B,i,u)$ and $I_K(B,i,v)$ over all $i\in[\ell]$ and vertices $u\in U$ and $v\in V$, we get
        \begin{equation*} 
                \sum_{\substack{
                B^\prime \in \mathcal{B} \\
                B^\prime \ \mathcal{S}\text{-intersects } B
                }} \pr{B^\prime} = \sum_{i \in [\ell]} O \left( n^{r}k \Delta_1 n^{\ell-i} \Delta_{\ell}\right) \cdot O\left(n^{-2r+i}\right) = O\left(n^{\ell-r} \Delta_1 \Delta_{\ell} k\right) ,
        \end{equation*}
        as required.
        Therefore, setting $k = \frac{ c_1 n^{r-\ell }} {\Delta_1 \Delta_{\ell}}$ for $c_1>0$ sufficiently small, we get \eqref{sumpb0}, which completes the proof of the part (i).
        
        For the part (ii), suppose $c$ is $k$-globally bounded, where $k = \frac{c_2 n^{r-\ell}}{\Delta_1 \Delta_\ell}$ and $c_2 < c_1$ is a positive real we determine later. Note that since $c_2 < c_1$, the equation \eqref{sumpb2} still holds.
It is therefore enough to prove a bound analogous to the one in Claim 1 for the number of bad events $B'$ of overlap zero. To be precise, $f: U \rightarrow V$ is again a random injection, and the set of bad events is now $\mathcal{B} \cup \mathcal{B}_0$, where $\mathcal{B}_0$ contains the events of overlap zero. The negative dependency graph has edges exactly between the pairs of $\mathcal S$-intersecting events in $\mathcal{B} \cup \mathcal{B}_0$.
        
        Fix an event $B=\badet$, vertices $u \in e_1 \cup e_2$ and $v \in \tau(e_1) \cup \tau(e_2)$, and define $I_G(B, i, u)$ and $I_K(B, i, v)$ just like before.
        
        \textbf{Claim 3. } $I_G(B, 0, u) + I_K(B, 0, v) =O\left(n^r\Delta_0\Delta_1 k \right)$.\\
        Recall that $\Delta_0$ is just the number of edges of $G$. The count is exactly like before. For $I_G(B, 0, u)$, we select an edge $e$ of $G$ containing $u$ and an injection $\tau^\prime: e \rightarrow V$ in $O(\Delta_1 n^{r})$ ways, and then another edge and its image which matches $c(\tau^\prime(e))$ in $O(\Delta_0 k)$ ways. Multiplying gives $I_G(B, 0, u) = O\left(n^r\Delta_0\Delta_1 k\right).$
        The same holds for $I_K(B, 0, v)$ -- we can select an edge $e$ and an injection $\tau^\prime: e \rightarrow V$ so that $ v \in \tau^\prime(e)$ in  $O\left(\Delta_1 n^{r}\right)$ ways, and then we are in the same position as above. Summing up completes the proof of Claim 3.
        
        Introducing $\Delta_0 = O\left(n^{\ell}\Delta_{\ell}\right)$ and $\pr{B^\prime} = O\left(n^{-2r}\right)$ for events $B^\prime$ of overlap $0$ gives
        
        \begin{equation}   
                \sum_{\substack{
                B^\prime \in \mathcal{B} \cup \mathcal{B}_0 \\
                B^\prime \ {\mathcal S}\text{-intersects } B
                }} \pr{B^\prime} =  O\left(n^{\ell-r} \Delta_1 \Delta_{\ell} k \right) .  \label{sumsum}
        \end{equation}
        
        We set $k = O \left( \frac{ n^{r-\ell} }{\Delta_1 \Delta{_\ell}} \right)$.
        Lemma \ref{asymmetric} implies that then there exists an embedding $f$ avoiding all events in $\mathcal{B} \cup \mathcal{B}_0$, i.e. a rainbow embedding of $G$.
    \end{proof}
    
    We conclude the section with two constructions. Firstly, we show that the power $k = O\left(n^{r-\ell} \right)$ is the highest power of $n$ for which we can guarantee an embedding. Secondly, for $l=r-1$ and $\Delta_1(G)\Delta_{r-1}(G) = \Theta(n)$, we cannot hope for a stronger embedding result than Theorem \ref{thmhyp1}.
    
        For the first construction, let $D^{(r)}_\ell(m)$ be an $m$-vertex $r$-uniform hypergraph such that each set of $\ell+1$ vertices is contained in exactly one edge. Note that a recent result of Keevash on the existence of designs~\cite{keevash} grants existence of
    such hypergraphs whenever $m$ is sufficiently large and the parameters $r,\ell$ and $m$ satisfy all the necessary divisibility conditions.
    Also note that each $\ell$-subset of the vertices of $D^{(r)}_\ell(m)$ is contained in $m-\ell$ edges. In fact, we do not need a design, but only a hypergraph in which all the vertex subsets of order $\ell$ are contained in at least two edges, but each $(\ell+1)$-subset is contained in at most one edge. Such hypergraphs can be constructed probabilistically using standard nibble techniques.  
        \begin{PROP} \label{besthyp}
					 Let $r,\ell$ and $m$ be integers such that there exists a hypergraph
           $D^{(r)}_\ell(m)$. For any $n\ge m$ there exists a globally
					 $n^{r-\ell}$-bounded coloring of $\Knr$ which contains no  properly
            colored copy of $D^{(r)}_\ell(m)$.
        \end{PROP}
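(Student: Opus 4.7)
The plan is to exhibit an explicit coloring whose color classes are indexed by $\ell$-subsets of $V(\Knr)$, and which forces two overlapping edges of any embedded copy of $D^{(r)}_\ell(m)$ to share a color. Concretely, I would fix a linear order $v_1<v_2<\cdots<v_n$ on $V(\Knr)$ and set $c(e)$ to be the $\ell$-subset of $e$ consisting of the $\ell$ smallest vertices of $e$ (with respect to this order).

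My first step is a direct count verifying global boundedness: an edge receives color $T=\{v_{i_1}<\cdots<v_{i_\ell}\}$ precisely when $T\subset e$ and every vertex of $e\setminus T$ has index larger than $i_\ell$. Hence the color class of $T$ has size $\binom{n-i_\ell}{r-\ell}\le\binom{n-\ell}{r-\ell}\le n^{r-\ell}$, so the coloring is globally $n^{r-\ell}$-bounded.

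The heart of the proof will be to rule out a properly colored copy. For an arbitrary injection $f:V(D^{(r)}_\ell(m))\hookrightarrow V(\Knr)$, let $W$ be the set of $\ell$ smallest vertices of the whole image $f(V(D^{(r)}_\ell(m)))$ and put $S=f^{-1}(W)$, an $\ell$-subset of $V(D^{(r)}_\ell(m))$. By the defining property of $D^{(r)}_\ell(m)$ (every $\ell$-subset lies in at least two edges), there exist two distinct edges $e_1,e_2\in E(D^{(r)}_\ell(m))$ containing $S$. Both $f(e_1)$ and $f(e_2)$ then contain $W$, and since every vertex of $W$ is strictly smaller than every vertex of $f(V(D^{(r)}_\ell(m)))\setminus W\supseteq f(e_i)\setminus W$, the set $W$ is exactly the $\ell$-set of smallest vertices in each of $f(e_1)$ and $f(e_2)$. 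Therefore $c(f(e_1))=W=c(f(e_2))$, while $e_1\cap e_2\supseteq S\neq\emptyset$, so $f$ is not a properly colored embedding.

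I do not anticipate any real obstacle. The only delicate point is to take $W$ as the $\ell$ globally smallest vertices of $f(V(D^{(r)}_\ell(m)))$ rather than inside a single edge: this choice is what guarantees $W$ remains the canonical $\ell$-subset for \emph{every} edge of $D^{(r)}_\ell(m)$ containing $S$, thereby producing the forced color collision on two overlapping edges.
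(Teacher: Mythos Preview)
Your proposal is correct and follows essentially the same argument as the paper: the coloring by the $\ell$ smallest vertices of each edge, the bound $\binom{n-\ell}{r-\ell}\le n^{r-\ell}$ on color classes, and the choice of the $\ell$ globally smallest vertices of the embedded copy to produce two overlapping same-colored edges. Your write-up is in fact slightly more careful than the paper's in spelling out why $W$ is the set of $\ell$ smallest vertices inside \emph{each} of $f(e_1)$ and $f(e_2)$, and in noting explicitly that $e_1\cap e_2\supseteq S\neq\emptyset$ so the collision is between overlapping edges.
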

    
    \begin{proof}
        Let $V$ be the vertex set of $\Knr$ with a given ordering, and
        let $c$ be the following coloring of the edges of $\Knr$ using $\card{\binom{n}\ell}$ colors. For each $v_1 < v_2 < \dots < v_r \in V$, set 
        $$c(v_1,v_2,\dots,v_r) = \{v_1,v_2,\dots, v_\ell\}.$$
        That is, the edges of $\Knr$ are colored so that the color of each edge is uniquely determined by the first $\ell$ vertices. The coloring $c$ is globally $n^{r-\ell}$-bounded.
Suppose there is a properly colored copy of $D^{(r)}_\ell(m)$ in $c$, and let $v_1, v_2,\dots, v_\ell$ be the minimal vertices in this copy. But then there are two edges $e_1$ and $e_2$ in this embedding of $G$ containing $v_1, v_2, \dots v_\ell$, so $c(e_1) = c(e_2) = \{v_1,v_2,\dots, v_r\}$, which is a contradiction.
    
    \end{proof}
        
        The second result is a hypergraph extension of the tree construction that the second two authors used in \cite{volec}.
        \begin{PROP} For any $r \geq 2$, there is an $r$-uniform hypergraph $G$ on $n$ vertices satisfying $\Delta_1(G)\Delta_{r-1}(G) = \Theta(n)$, and a globally $O(1)$-bounded coloring of $\Knr$ which is not $G$-proper.
        \label{thm:tree}
        \end{PROP}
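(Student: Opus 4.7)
The plan is to extend the tree construction of the second and third authors from \cite{volec} to $r$-uniform hypergraphs, giving a matching lower-bound construction for Theorem \ref{thmhyp1} in the regime $\ell = r-1$.

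I would first construct $G$ as a two-level tight spider. Set $q = \lceil \sqrt n \rceil$ and take a designated root vertex $v$ together with $q$ pairwise disjoint $(r-1)$-element branch sets $U_1, \ldots, U_q$ connected to $v$ by the edges $\{v\} \cup U_i$, and $q-1$ fresh pendant vertices $w_{i,1}, \ldots, w_{i,q-1}$ attached to each $U_i$ via the edges $U_i \cup \{w_{i,j}\}$. Padding with isolated vertices brings the total count up to exactly $n$. Direct inspection yields $\Delta_1(G) = q$ (attained at each $u \in U_i$, which lies in one level-1 edge and $q-1$ level-2 edges) and $\Delta_{r-1}(G) = q$ (attained at each $U_i$, which extends to $q$ edges), so $\Delta_1(G)\,\Delta_{r-1}(G) = q^2 = \Theta(n)$, as required. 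For $r=2$, this $G$ specializes to the broom-like tree construction of \cite{volec}.

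Next, I would design the coloring $c$ of $\Knr$ as a hypergraph analog of the \cite{volec} construction. The general recipe is to partition $V(\Knr)$ into suitable clusters and to attach to each edge $e = \{x_1 < x_2 < \cdots < x_r\}$ a color determined by the $(r-1)$-set of the smaller vertices of $e$ together with a combinatorial (Latin-square or projective-plane style) descriptor of the cluster containing the largest vertex of $e$. With a careful choice of cluster sizes and descriptor, $c$ is globally $O(1)$-bounded. Suppose for contradiction that $f : V(G) \to V(\Knr)$ realized a properly colored copy of $G$. Then for every $i \in [q]$ the $q$ edges through $f(U_i)$, namely $\{f(v)\} \cup f(U_i)$ and the $q-1$ edges $\{f(w_{i,j})\} \cup f(U_i)$, would have to receive pairwise distinct colors. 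By the design of $c$ these colors are governed by $f(U_i)$ together with the cluster of the remaining vertex, and a pigeonhole/incidence argument on the $q$ possible final vertices would force at least two of the extensions to share a color, a contradiction.

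The principal technical obstacle is the design of $c$. Globally $O(1)$-bounded colorings are very close to rainbow, so extensions of a generic $(r-1)$-set receive pairwise distinct colors; we need $c$ to be simultaneously globally $O(1)$-bounded and locally rigid enough that, for every $(r-1)$-set $R \subseteq V(\Knr)$ which could serve as an image $f(U_i)$, any $q$ of the extensions $R \cup \{x\}$ dictated by $f$ must repeat a color. Resolving this tension is a delicate combinatorial-design problem: in the graph case \cite{volec} it is handled by building the coloring around a blow-up of a carefully chosen constant-size pattern, and the plan is to mirror that construction in the $r$-uniform setting.
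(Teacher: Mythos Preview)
Your proposal has a genuine gap: the coloring $c$ is never actually constructed, and your final paragraph concedes that this is the ``principal technical obstacle.'' Worse, your choice of $G$ seems to make that obstacle real rather than merely technical.

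The paper's $G$ is structured very differently from yours. With a parameter $n_1$ satisfying $n=\Theta(n_1^r)$, it takes a root $v$, a first level $L_1$ of size $n_1$ in which \emph{every} $(r-1)$-subset forms an edge with $v$, and for each such $(r-1)$-subset $S\subset L_1$ a set of $n_1$ private children $w$ in a second level $L_2$, each giving an edge $S\cup\{w\}$. Thus $\Delta_1(G)=\Theta(n_1^{r-1})$ (attained at $v$) and $\Delta_{r-1}(G)=\Theta(n_1)$, and crucially there are no isolated vertices. The coloring is then extremely simple: partition $[n]$ into clusters of size $r+1$ and color each edge by the multiset of cluster indices of its vertices; this is globally $(r+1)^r$-bounded. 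For any bijective embedding $f$, the cluster $S_1$ containing $f(v)$ has $r$ further elements, each the image of some vertex of $G$. If one of them is $f(w)$ with $w\in L_2$ a child of some $S\subset L_1$, then $S\cup\{v\}$ and $S\cup\{w\}$ are both edges of $G$ and receive the same color (their images lie in the same multiset of clusters). Otherwise all $r$ of them lie in $f(L_1)$, and then any two of the $r$ edges obtained by adjoining $v$ to an $(r-1)$-subset of these preimages collide.

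Your $G$ cannot support this argument. Your root lies in only $q=\Theta(\sqrt n)$ edges, the $(r-1)$-sets $U_i$ are pairwise disjoint, and you pad with isolated vertices; so the cluster containing $f(v)$ can be filled with isolated vertices (or with pendants from distinct branches) without forcing any monochromatic cherry. In fact, for $q$ reduced by a constant factor your $G$ embeds properly into the paper's coloring. Your suggested pigeonhole --- that any $q$ extensions of a fixed $f(U_i)$ must repeat a color --- cannot hold in a globally $O(1)$-bounded coloring, since a fixed $(r-1)$-set has $\Theta(n)$ extensions using $\Theta(n)$ distinct colors, from which one can trivially select $\Theta(\sqrt n)$ rainbow ones. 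The paper sidesteps this entirely by making $\Delta_1$ large and $\Delta_{r-1}$ small rather than balancing them, so that the root's cluster alone forces a collision.
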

            \begin{proof}
            Let $n_1$ be a natural number and $n = 1 + n_1+ \binom{n_1}{r-1}n_1 = \Theta \left(n_1^{r} \right)$.  Let $G$ have vertex set $V = \{v\} \cup L_1 \cup L_2$. Here $|L_1| = n_1$ are vertices in the first level, which means that each $(r-1)$-tuple in $L_1$ forms an edge with the \emph{root} $v$. Furthermore, each $(r-1)$-tuple $S \subset L_1$ has its own $n_1$ \emph{children} belonging to $L_2$. In other words, for each $u \in L_2$, there is a unique $(r-1)$-tuple $S \subset L_1$ such that $S \cup \{ u\}$ is an edge. 
            Indeed, $G$ has the $1$-degree $\Delta_1(G) = \Theta\left(n_1^{r-1} \right)$ attained by $v$ and the vertices in the first level. Moreover, $\Delta_{r-1}(G) \leq n_1$ by looking at any $r-1$ vertices in $L_1 \cup \{ v \}$.
            
            Next, we give the promised coloring $c$ of $\Knr$. Let $n$ be partitioned into sets $S_1, S_2, \dots, S_{\frac{n}{r+1}}$ of order $r+1$. An edge $\{u_1, u_2, \dots u_r \}$ satisfying $u_1 \in S_{i_1}$, $u_2 \in S_{i_2}$, up to $u_r \in S_{i_r}$ gets the color $\{i_1, i_2, \dots i_r \}$ (viewed as a multiset). The coloring is globally $(r+1)^r$-bounded. Suppose that the bijection $f: V \to [n]$ induces a properly colored copy of $G$, and that, without loss of generality, the image of $v$ lies in $S_1$. If the other $r$ members of $S_1$ lie in $f(L_1)$, then they span $r$ edges of color $\{1, 1, \dots 1 \}$. Otherwise, let $w \in L_2$ satisfy $f(w) \in S_1$. There is an $(r-1)$-tuple $S \subset L_1$ which forms an edge in $G$ with $w$, and an edge with $v$. The edges $f(S \cup \{ v\}) $ and $f(S \cup \{ w\}) $ have the same color in the embedding given by $f$.
            \end{proof}
    
    \section{Concluding remarks}
In this paper we studied the problem of finding a rainbow and properly colored copy of a graph/hypergraph $G$ with some degree restrictions
in a bounded $k$-coloring of the complete multipartite graph/hypergraph.
We obtained upper bounds on $k$ in terms of maximum degree/$\ell$-degree of $G$
that guarantees locally and globally $k$-bounded colorings to be $G$-proper and $G$-rainbow, respectively. Moreover, for multipartite graphs, the dependence of $k$ on other parameters in our bounds is the best possible up to a constant factor. However, there are several natural questions which remain
open. Here we mention two of them that we find the most interesting.  We state them only for properly colored copies of hypergraphs in locally bounded colorings, but the analogues for globally bounded colorings seeking rainbow copies are just as interesting.

The first question asks for the correct asymptotics of $k$ that guarantee a properly colored copy
of a tight Hamilton cycle in locally $k$-bounded colorings of $\Knr$.
What is the largest possible $k$, such that any locally $k$-bounded coloring of $\Knr$ is $C_n^{r}(r-1)$-proper?
In particular, for $r=3$, is there an $\eps >0$ such that any locally $O(n^{1+\eps})$-bounded coloring of $K_n^{(3)}$ contains a tight Hamilton cycle?

We have shown that the dependence on $n$ in Theorem \ref{thmhyp1} is the best
possible. However, apart from the case $l = r-1$, we do not know if the dependence on the maximum degrees $\Delta_1,\Delta_2,\dots,\Delta_\ell$ of $G$ is the correct one or not.
The first unresolved case are the $3$-uniform linear hypergraphs, i.e.~hypergraphs $G$ satisfying $\Delta_2(G)=1$. Are there 3-uniform $n$-vertex \emph{linear} hypergraphs $G$ and $O \left(\frac{n^2}{\Delta_1(G)^2} \right)$-bounded colorings which are not $G$-proper?

\vspace{0.3cm}

{\bf Acknowledgements.}\,  The authors would like to thank Oriol Serra for
fruitful discussions on the topics related to this project and pointing
out the references~\cite{cano} and~\cite{lumohrszek} to us. We are grateful to the anonymous referees for their valuable comments, which improved the presentation of the results.

\pagebreak
\appendix
\section{Multidimensional Lu-Sz\'ekely}
    
We now present a proof of Theorem \ref{luszekely}. Note that the case $m=1$ is
a result of Lu and Sz\'ekely from~\cite{lu}. As in~\cite{lu}, we actually show
that the following slightly stronger choice of a graph gives a negative
dependency graph.

We say that two canonical events  $\Omega(T_1, U_1,
\tau_1)$ and $\Omega(T_2, U_2, \tau_2)$  in the probability space $\Omega$
\emph{conflict} if
    $$ \exists x \in T_1 \cap T_2: \tau_1(x) \neq \tau_2(x) \quad \text{or} \quad \exists y \in U_1 \cap U_2 : \tau_1^{-1}(y) \neq \tau_2^{-1}(y).$$
    Clearly two conflicting events are disjoint, and therefore negatively correlated. We now show that just connecting the conflicting events suffices for a negative dependency graph, even if we require the injections to respect a given partition of $X$ and $Y$. 
 
    \begin{THM} \label{luszekely1}
     Let $X = X_1 \times \dots \times X_m$ and $Y = Y_1 \times \dots Y_m$, where the parts $X_k$ and $Y_k$ satisfy $|X_k| \leq|Y_k| $ for all $k \in [m]$. Consider the probability space $\Omega$ generated by picking a uniformly random injection $\sigma: X \rightarrow Y$ satisfying $\sigma(X_k) \subset Y_k$ for all $k$. Denote the set of such injections by $\mathcal{S}$.
    
     Let $B_1, B_2, \dots , B_N$ be canonical events in $\Omega$, and define the graph $D^\prime$ on $[N]$ by
     $$E(D^\prime) = \{ij : B_i \text{ and }B_j \text{ conflict} \}.$$
     Then $D^\prime$ is a negative dependency graph for the events $B_1, \dots B_N$. 
        \end{THM}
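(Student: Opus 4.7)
The plan is to adapt the argument of Lu and Sz\'ekely from~\cite{lu}, which handles the case $m=1$, to our multidimensional setting. The probability space $\Omega$ factors as a product of the uniform spaces of injections $\sigma_k\colon X_k\to Y_k$, and every canonical event $\Omega(T,U,\tau)$ (together with the conflict relation between two such events) factors coordinate-by-coordinate accordingly. Hence all set-theoretic manipulations will happen within a single part, and no genuinely new idea beyond~\cite{lu} is needed for $m>1$.

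I would first rewrite the negative dependency condition in the equivalent form
\begin{equation*}
    \pr{\bigwedge_{j \in J}\overline{B_j} \,\Big|\, B_i} \;\leq\; \pr{\bigwedge_{j \in J}\overline{B_j}}, \qquad (\star)
\end{equation*}
for every $i$ and every $J\subset[N]$ such that no $B_j$ with $j\in J$ conflicts with $B_i$. The statement $(\star)$ is then proved by induction on $|T_i|$, the base case $|T_i|=0$ being trivial since $B_i=\Omega$. For the inductive step, pick $x_0\in T_i$ with $y_0:=\tau_i(x_0)$ and decompose $B_i=B_i'\cap B_i''$, where $B_i'=\Omega(T_i\setminus\{x_0\},U_i\setminus\{y_0\},\tau_i|_{T_i\setminus\{x_0\}})$ and $B_i''=\{\sigma(x_0)=y_0\}$. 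Non-conflict of each $B_j$ with $B_i$ is clearly inherited as non-conflict with $B_i'$, so the inductive hypothesis yields $\pr{\bigwedge_j\overline{B_j}\mid B_i'}\leq\pr{\bigwedge_j\overline{B_j}}$. Further conditioning on $B_i'$ produces a smaller probability space of exactly the same product-of-injections form (with $X_k$ replaced by $X_k\setminus(T_i\setminus\{x_0\})$ and $Y_k$ by $Y_k\setminus(U_i\setminus\{y_0\})$), in which each $B_j$ transforms into a canonical event $B_j^*$ still non-conflicting with the single-constraint event $B_i''$. Chaining the two conditional inequalities reduces $(\star)$ to the following \emph{single-constraint base inequality}: for $B^{**}=\{\sigma(x_0)=y_0\}$, with $x_0 \in X_k$ and $y_0\in Y_k$, and any family $\{B_j\}$ of canonical events non-conflicting with $B^{**}$,
\begin{equation*}
    \pr{\bigwedge_j\overline{B_j}\,\Big|\,B^{**}}\;\leq\;\pr{\bigwedge_j\overline{B_j}}.
\end{equation*}

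The main obstacle is proving this base inequality by a switching argument inside the single part $Y_k$. Non-conflict with $B^{**}$ splits the index set into $J_1=\{j:x_0\in T_j\text{ and }\tau_j(x_0)=y_0\}$ and $J_2=\{j:x_0\notin T_j,\ y_0\notin U_j\}$. The base inequality is equivalent to the combinatorial bound $|Y_k|\cdot\big|B^{**}\cap\bigcap_j\overline{B_j}\big|\leq\big|\bigcap_j\overline{B_j}\big|$, which I would prove by exhibiting an injection
\begin{equation*}
    \Phi\colon Y_k\times\Big(B^{**}\cap\bigcap_j\overline{B_j}\Big)\;\hookrightarrow\;\bigcap_j\overline{B_j}.
\end{equation*}
Given $(y,\sigma)$ in the domain, set $\sigma'=\sigma$ if $y=y_0$; if $y\neq y_0$ and $y=\sigma(x')$ for some $x'\in X_k$, define $\sigma'$ by swapping the values of $\sigma$ at $x_0$ and $x'$; and if $y\neq y_0$ with $y\notin\sigma(X_k)$, let $\sigma'$ agree with $\sigma$ everywhere except $\sigma'(x_0)=y$. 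In all three cases $\sigma'$ remains a part-respecting injection.

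To verify $\sigma'\in\bigcap_j\overline{B_j}$: for $j\in J_1$ and $y\neq y_0$ one has $\sigma'(x_0)=y\neq y_0=\tau_j(x_0)$, so $\sigma'\notin B_j$, while the case $y=y_0$ is inherited from $\sigma\notin B_j$. For $j\in J_2$, $\sigma'$ and $\sigma$ agree off $\{x_0,x'\}$; since $x_0\notin T_j$ and, whenever $x'\in T_j$, the new value $\sigma'(x')=y_0\notin U_j$ cannot equal $\tau_j(x')$, it again follows that $\sigma'\notin B_j$. Injectivity of $\Phi$ is clear: from $\sigma'$ one recovers $y=\sigma'(x_0)$ and distinguishes the three cases of the construction by whether $y=y_0$ and whether $y_0\in\sigma'(X_k)$, then inverts the swap or the reassignment. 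This completes the base inequality, hence the induction, and establishes that $D'$ is a negative dependency graph.
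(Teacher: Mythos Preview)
Your proof is correct, but it takes a genuinely different route from the paper's.

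The paper proves $(\star)$ directly (no induction on $|T_i|$) via a symmetry/averaging argument: for any matching $(T,U',\tau')$ with the same domain $T=T_i$, it constructs a part-respecting permutation $\rho$ of $Y$ such that the induced automorphism $\pi_\rho$ of $\Omega$ sends $B'=\Omega(T,U',\tau')$ to $B_i$ while fixing every $B_j$ that does not conflict with $B'$. This yields $\pr{(\bigwedge_{j\in J}\overline{B_j})\wedge B_i}\le \pr{(\bigwedge_{j\in J}\overline{B_j})\wedge B'}$ for every such $B'$; summing over the partition of $\Omega$ into the events $B'$ gives $(\star)$ in one stroke.

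Your argument instead peels off the constraints of $B_i$ one at a time, reducing via induction to a single-constraint event $B^{**}=\{\sigma(x_0)=y_0\}$, and then handles that case by an explicit switching injection $\Phi$. The verification that $\sigma'\in\bigcap_j\overline{B_j}$ is clean precisely because non-conflict with $B^{**}$ forces the dichotomy $J=J_1\cup J_2$ you describe, and the inversion of $\Phi$ via the value $\sigma'(x_0)$ and the test $y_0\in\sigma'(X_k)$ is correct. The inductive step is also sound: conditioning on $B_i'$ produces a product-of-injections space of the same type, each $B_j$ restricts to a canonical event $B_j^*$ (non-conflict with $B_i'$ guarantees that $\tau_j$ maps $T_j\setminus T_i'$ into $Y\setminus U_i'$), and non-conflict with $B_i''$ is inherited from non-conflict with the full $B_i$.

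What each approach buys: the paper's automorphism argument is a direct transcription of Lu--Sz\'ekely and handles all of $T_i$ at once, at the cost of defining $\rho$ carefully and checking it fixes the relevant $U_j$. Your switching argument is more elementary and entirely bijective, but requires the inductive scaffolding and the case analysis for $\Phi$. Both exploit the same underlying fact---that $\Omega$ has a large symmetry group acting transitively on the images of any fixed $T$---just packaged differently.
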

        Since the dependency graph $D^\prime$ is a subgraph of the graph $D$ in Theorem \ref{luszekely} with the edges between pairs of $\mathcal S$-intersecting events, it follows that $D$ is also a negative dependency graph for the same set of events.
    \begin{proof}
        Our proof follows the outline of \cite[Theorem 1]{lu}, but there are several claims we need to verify in the multidimensional setting. 
        
         A \emph{matching} between $X$ and $Y$ is a triple $(T, U, \tau)$, where $\tau$ is a part-respecting bijection from $T \subset X$ to $U \subset Y$, that is $\tau(X_k \cap T) \subset Y_k$ for all $k$. All the functions we consider will be part-respecting. Fix an event $B_i = \Omega(T, U, \tau)$ for a matching $(T, U, \tau)$, and a set of indices $J =J(i) \subset \{j \in [N]: ij \notin E(D^\prime) \}$. We are to show the inequality $\pr{B_i \mid \wedge_{j \in J} \overline{B_j}} \leq \pr{B_i}$, which is equivalent to
        \begin{equation}
            \label{negdep}
            \pr{\bigwedge_{j \in J} \overline{B_j} \mid B_i } \leq \pr{\bigwedge_{j \in J} \overline{B_j}}.
        \end{equation} 
        
        Here we assume that $\pr{\bigwedge_{j \in J} \overline{B_j}}>0$, otherwise there is nothing to prove. The inequality follows immediately from the following claim.
        
        \textbf{Claim.} For any canonical event $B^\prime = \Omega(T, U^\prime, \tau^\prime)$,
        \begin{equation} \label{symmetry}
            \pr{ \left(\bigwedge_{j \in J} \overline{B_j} \right) \wedge B_i} \leq 
            \pr{ \left(\bigwedge_{j \in J} \overline{B_j} \right) \wedge B^\prime}.
        \end{equation}
        
        Intuitively, the claim says that upon $\wedge_{j \in J} \overline{B_j}$, there is no mapping of $T$ that is less likely than $ T \stackrel{\tau}{\rightarrow} U$.
        
        \textbf{Proof of Claim.} Fix a canonical event $B^\prime = \Omega(T, U^\prime, \tau^\prime)$. Let $J^\prime = J^\prime(J(i), B^\prime)$ be the set of indices $j \in J$ so that $B_j$ does not conflict $B^\prime$. If $j \in J \setminus J^\prime$, then $B_j$ conflicts $B^\prime$, that is $B^\prime$ implies $\overline{B_j}$. Therefore
        \begin{align*}
            \overline{B_j} \wedge B^\prime = B^\prime, \text{\quad so \quad }
            \left(\bigwedge_{j \in J} \overline{B_j} \right) \wedge B^\prime = \left(\bigwedge_{j \in J^\prime} \overline{B_j} \right) \wedge B^\prime .
        \end{align*}
        
        The idea is that the functions $\sigma \in \wedge_{j \in J^\prime} \overline{B_j}$ are equally likely to map $T$ via $\tau$ to $U$ as they are to map it via $\tau^\prime$ to $U^\prime$. Formally, we construct an automorphism of the probability space $\Omega$ which fixes each of the events $B_j = (T_j, U_j, \tau_j)$ for $j \in J^\prime$, but maps $B^\prime$ to $B_i$.
        
        To do this, we set up some notation. Let $\rho$ be a part-respecting permutation of $Y$, i.e. a bijection $Y \rightarrow Y$ satisfying $\rho (Y_k) \subset Y_k$ for all $k \in [m]$. The permutation $\rho$ determines an action on the matchings
        $$\pi_\rho (\sigma) = \rho \circ \sigma \text{ \quad for all } (P, Q, \sigma).$$
        Clearly $\pi_\rho$ takes any matching $(P, Q, \sigma)$ to a matching $(P, \rho(Q), \rho \sigma)$ with the same domain. Furthermore, it preserves the uniform measure on $\Omega$ (for this it is crucial to notice that $\rho$ preserves parts of $Y$). Finally, the action of  $\pi_\rho$ on the canonical events in $\Omega$ is described by
        $$\pi_\rho(\Omega(P, Q, \sigma))= \Omega(P, \rho(Q), \rho \sigma).$$
        
        Now we are ready to show that
        \begin{equation}
        \label{eq10}
            \pr{ \left(\bigwedge_{j \in J^\prime} \overline{B_j} \right) \wedge B_i} = \pr{ \left(\bigwedge_{j \in J^\prime} \overline{B_j} \right) \wedge B^\prime}.
        \end{equation}

        Let $\rho: Y \rightarrow Y$ be a part-respecting bijection defined by
        $$\rho(y) = y \text{ for any } y \in Y \setminus U^\prime \text{\quad and \quad} \rho(y) = \tau (\tau^{\prime{-1}}(y)) \text{ for } y \in U^\prime.$$
        We first check that $\rho$ fixes points of $U_j$ for $j \in J^\prime$. This is clear for $y \in U_j \setminus U^\prime$. For $y \in U_j \cap U^\prime$, there is an $x \in T$ with $\tau^\prime (x) = y$. Since the events $B_j$ with $j \in J^\prime$ conflict neither $B_i$ nor $B^\prime$, this $x$ satisfies $\tau (x)= \tau_j(x) = \tau(x)^\prime =y$, so indeed $\rho(y) = y$, and therefore  $\pi_\rho(T_j, U_j, \tau_j) = (T_j, U_j, \tau_j)$. Furthermore, $\rho(\tau^\prime(x)) = \tau(x) $ for every $x \in T$, so $\pi_\rho(T, U^\prime, \tau^\prime) = (T, U, \tau)$. 
        
        This proves Equation \eqref{eq10}, since the two events correspond to each other under the automorphism $\pi_\rho$. Hence
        \begin{align*}
            \pr{ \left(\wedge_{j \in J} \overline{B_j} \right) \wedge B_i} &\leq \pr{ \left(\wedge_{j \in J^\prime} \overline{B_j} \right) \wedge B_i}\\
            &= \pr{ \left(\wedge_{j \in J^\prime} \overline{B_j} \right) \wedge B^\prime} \\
            &= \pr{ \left(\wedge_{j \in J} \overline{B_j} \right) \wedge B^\prime}.
        \end{align*}
        This proves the claim. Keeping $T$ fixed, the events $B^\prime = \Omega(T, U^\prime, \tau^\prime)$ across all the matchings $(T, U^\prime, \tau^\prime)$ partition the probability space $\Omega$, so summing up equation \eqref{eq10} over all such $B^\prime$ gives
        \begin{align*}
            \pr{ \wedge_{j \in J^\prime} \overline{B_j} } &= \sum_{B^\prime} \pr{ \left(\wedge_{j \in J^\prime} \overline{B_j} \right) \wedge B^\prime} \\
            & \geq \sum_{B^\prime} \pr{ \left(\wedge_{j \in J^\prime} \overline{B_j} \right) \wedge B_i}\\
           &= \sum_{B^\prime} \pr{ \left(\wedge_{j \in J^\prime} \overline{B_j} \right) \mid B_i} \pr{B_i}\\
            &= \sum_{B^\prime}\pr{ \left(\wedge_{j \in J^\prime} \overline{B_j} \right) \mid B_i} \pr{B^\prime}\\
            &= \pr{ \left(\wedge_{j \in J^\prime} \overline{B_j} \right) \mid B_i},
        \end{align*}
        where in the fourth line we used the fact that $\pr{B_i} = \pr{B^\prime}$ by the uniformity of our probability space $\Omega$.
    \end{proof}        
   Since any two conflicting events in the space $\Omega$ are $\mathcal S$-intersecting, Theorem~\ref{luszekely} immediately follows.

\end{document}